\begin{document}
\providecommand{\keywords}[1]{\textbf{\textit{Keywords: }} #1}
\newtheorem{theorem}{Theorem}[section]
\newtheorem{lemma}[theorem]{Lemma}
\newtheorem{proposition}[theorem]{Proposition}
\newtheorem{corollary}[theorem]{Corollary}
\newtheorem{problem}[theorem]{Problem}
\newtheorem{question}[theorem]{Question}
\newtheorem{conjecture}[theorem]{Conjecture}
\newtheorem{claim}[theorem]{Claim}
\newtheorem{condition}[theorem]{Condition}

\theoremstyle{definition}
\newtheorem{definition}[theorem]{Definition} 
\theoremstyle{remark}
\newtheorem{remark}[theorem]{Remark}
\newtheorem{example}[theorem]{Example}
\newtheorem{condenum}{Condition}

\def\p{\mathfrak{p}}
\def\q{\mathfrak{q}}
\def\s{\mathfrak{S}}
\def\Gal{\mathrm{Gal}}
\def\Ker{\mathrm{Ker}}
\def\soc{\mathrm{soc}}
\def\Coker{\mathrm{Coker}}
\newcommand{\cc}{{\mathbb{C}}}   
\newcommand{\ff}{{\mathbb{F}}}  
\newcommand{\nn}{{\mathbb{N}}}   
\newcommand{\qq}{{\mathbb{Q}}}  
\newcommand{\rr}{{\mathbb{R}}}   
\newcommand{\zz}{{\mathbb{Z}}}  
\def\K{\kappa}

\title{On the set of stable primes for postcritically infinite maps over number fields}
\author{Joachim K\"onig}
\address{Department of Mathematics Education, Korea National University of Education, Cheongju 28173, South Korea}
\email{jkoenig@knue.ac.kr}
\footnotetext{{\ 2020 Mathematics Subject Classification.} Primary 37P05; Secondary 37P25, 12E05, 20B05} 
\begin{abstract}
Many interesting questions in arithmetic dynamics revolve, in one way or another, around the (local and/or global) reducibility behavior of iterates of a polynomial. We show that for very general families of integer polynomials $f$ (and, more generally, rational functions over number fields), the set of stable primes, i.e., primes modulo which all iterates of $f$ are irreducible, is a density zero set. Compared to previous results, our families cover a much wider ground, and in particular apply to $100\%$ of polynomials of any given odd degree, thus adding evidence to the conjecture that polynomials with a ``large" set of stable primes are necessarily of a very specific shape, and in particular are necessarily postcritically finite.
\end{abstract}
\keywords{Arithmetic dynamics; Arboreal representations; polynomials; irreducibility; permutation groups.}
\maketitle

\section{Introduction and statement of main results}
\label{sec:intro}

  In all of the following, let $K$ denote a number field.
  
  Let $f$ be a polynomial over $K$, and $a\in K$. Then $(f,a)$ is called {\it stable} if $f^{\circ n}(X)-a$ is irreducible for all $n\in \mathbb{N}$, where $f^{\circ n}$ denotes the $n$-th iterate of $f$. Similarly, if $p$ is a prime ideal of $O_K$, $(f,a)$ is called {\it stable modulo $p$}, if the mod-$p$ reduction of $f^{\circ n}(X)-a$ is defined, of non-decreasing degree and irreducible for all $n$. In this case, $p$ is also called a stable prime for $(f,a)$.  The same notions generalize to rational functions $f\in K(X)$ upon replacing $f^{\circ n}(X)-a$ by its numerator. In case $a=0$, we simply say ``$f$ is stable" instead of ``$(f,0)$ is stable".
  Stability (and its generalizations such as ``eventual stability") has been a key object of study in arithmetic dynamics, see, e.g., \cite{JL}, \cite{DHJMSS} or \cite{LSS} for global stability questions and \cite{BJ}, \cite{Jones}, \cite{Ferr} or \cite{MOS} for mod-$p$ stability.
  
  Although there are known examples of polynomials with positive density sets of stable primes (see, e.g., \cite[Theorem 6.1(2)]{Jones} for a sufficient criterion in the case of quadratic PCF maps, and \cite[Section 1]{MOS} for some explicit examples), several  previous papers (e.g., \cite{Ferr}, \cite{KNR}) contributed to the suspicion that ``usually" (i.e., for sufficiently general polynomials $f$), mod-$p$ stability should be a rare event, i.e., should occur only for a density zero set of all primes, or even only for finitely many primes. Cf.\ also Question 19.12 in \cite{Survey}.
  In \cite{Koe24}, the author showed, using purely group-theoretical methods, that indeed a positive density set of stable primes for $(f,a)$ forces $f$ to have an unusually small arboreal representation over $a$, i.e., the Galois group $G_{a,n}:=\textrm{Gal}(f^{\circ n}(X)-a/K)$ must be of order asymptotically at most the order of some iterated wreath product of {\it cyclic} groups. On the other hand, it is generally expected that the group $G_{a,n}$ is ``usually" close to (the maximal possible group, namely) an iterated wreath product of symmetric groups. This expectation can be traced back to work of Odoni \cite{Odoni2}, showing that for a generic polynomial (i.e., with coefficients viewed as independent transcendentals), the group $G_{a,n}$ is the full iterated wreath product for all $n$. 
 Later works such as \cite{Jones2} (for the quadratic case) and \cite{BDGHT} (for very general maps) developed various ``finite index conjectures" stating that the group $G_{a,\infty}:=\varprojlim_n G_{a,n}$ should be of finite index in $G_{t,\infty}$ (where $t$ denotes a transcendental taking the place of $a$), unless the pair $(f,a)$ falls into certain very explicit exceptional cases.
    Examples of arboreal representations whose Galois group $G_{a,\infty}$ is maximal possible are known, e.g., from \cite{Kadets}, with special cases going back to Odoni \cite{Odoni}, but these are still far from proving the general expectations.

   The paper \cite{Koe24} also showed
  that, {\it conditional} on the expectations on the largeness of arboreal representations as presented in \cite{BDGHT}, only compositions of linear translates of monomials and Dickson polynomials can have a positive density set of stable primes, but gave no unconditional results in this direction.
  Similar density zero conclusions under more restrictive ``largeness" assumptions 
  had been known from \cite{Ferr} or \cite{KNR}. 
    
  In the present paper, we combine the group-theoretical results of \cite{Koe24} with arithmetic techniques to show {\it unconditionally} that, indeed, for many (and even ``most", in a suitable sense) polynomials $f$, such unusually small representations cannot possibly exist for any value $a\in K$.

  Recall that a rational function $f$ over a field $K$ is called {\it postcritically finite} (PCF) if all critical points $\alpha$ of $f$ have a finite forward orbit $\{f^{\circ n}(\alpha)\mid n\in \mathbb{N}\}$, and {\it postcritically infinite} otherwise. A point with a finite (resp., an infinite) forward orbit is also called a {\it preperiodic} point (resp., a {\it wandering} point) of $f$. By the {\it multiplicity} of a point $\alpha\in \overline{K}\cup \{\infty\}$ under a rational function $f$ we always mean its multiplicity as a root of $f(X)-f(\alpha)$ (resp., of $1/f$ in case $f(\alpha)=\infty$). In particular, a critical point is a point of multiplicity $\ge 2$. To avoid confusion, the reader should be aware that this multiplicity is larger by $1$ than what is often called the multiplicity of a critical point (namely, its multiplicity as a root of $f'(X)$).
  
  We will show that, under surprisingly weak extra assumptions, postcritically infinite rational functions cannot possibly possess positive density sets of stable primes. 
  Our main result is the following.
  \begin{theorem}
  \label{thm:main}
  Let $f\in K(X)$ be a postcritically infinite rational function fulfilling the following: There exists some critical point $\alpha$ of $f$ which is a wandering point and whose multiplicity under $f$ is divisible by some prime number coprime to $\deg(f)$.  
  Then the set of stable primes of $f$ is of density zero.
  \end{theorem}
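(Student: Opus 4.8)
The strategy is a proof by contradiction, combining the group-theoretic dichotomy of \cite{Koe24} with a ramification analysis of the arboreal tower at carefully chosen primes. Suppose the set of stable primes of $f$ has positive density. Then it is nonempty, so $(f,0)$ is stable: every iterate $f^{\circ n}(X)$ is irreducible over $K$, hence (being irreducible over a field of characteristic $0$) squarefree; in particular $0$ does not lie in the postcritical orbit of $f$, nor is $0$ itself a critical point, for otherwise some $f^{\circ k}(X)$ would acquire a multiple root. Put $d=\deg f$. The hypothesis — that some critical point $\alpha$ which is a wandering point has multiplicity $m$ with $\ell\mid m$ and $\ell\nmid d$ — forces $m\ne d$, hence $m<d$, since $m\mid d$ would give $\ell\mid d$; note also that $d=2$ is vacuous (a degree-$2$ map has $m=2=d$ at each critical point), so $d\ge 3$. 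Writing $v:=f(\alpha)$, the wandering hypothesis together with $0\notin\mathrm{PostCrit}(f)$ gives that $v,f(v),f^{\circ2}(v),\dots$ are pairwise distinct and nonzero. For notational simplicity I will assume $\alpha\in K$; the general case is handled by working over $K(\alpha)$, which only changes the bookkeeping below.

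The arithmetic core is to show that, for infinitely many $n$, the inertia in the splitting field $L_n$ of $f^{\circ n}(X)$ over $K$ sees the $m$-fold ramification at $\alpha$ in a ``localized'' way. Since $\alpha$ is not preperiodic, $\hat h_f(\alpha)>0$, so $h(f^{\circ n}(\alpha))\sim d^{\,n}\hat h_f(\alpha)\to\infty$; by the theory of primitive prime divisors in orbits (for $d\ge 3$ this can also be extracted from the height growth, as $\sum_{j<n}d^{\,j}<d^{\,n}$), for all sufficiently large $n$ the algebraic integer $f^{\circ n}(\alpha)=f^{\circ(n-1)}(v)$ has a prime divisor $\p$ of $O_K$ dividing none of the earlier orbit terms. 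Discarding the finitely many genuinely bad primes (bad reduction of $f$, divisors of $m$, of leading coefficients, and of the relevant discriminants and resultants) and, with some additional work, choosing $\p$ so that it also avoids the primes modulo which the forward orbit of $v$ meets the critical locus of $f$ and so that $v_\p(f^{\circ(n-1)}(v))$ is coprime to $\ell$, one obtains the following picture modulo $\p$: $\bar v$ is a simple root of $f^{\circ(n-1)}(Y)$ — so there is a unique node $v'$ at level $n-1$ of the tree $f^{-n}(0)$ with $v'\equiv v$ — and $f^{\circ n}(X)$ has $\bar\alpha$ as a root of multiplicity exactly $m$, all other roots being simple. Writing $f(X)-v=(X-\alpha)^m r(X)$ with $r(\alpha)$ a $\p$-unit, the $m$ roots of $f^{\circ n}(X)$ reducing to $\bar\alpha$ are precisely the roots $X$ with $f(X)=v'$; hence they are all children of $v'$, and they satisfy $(X-\alpha)^m=\p^{\,e}\cdot(\text{$\p$-unit})$ with $e=v_\p(f^{\circ(n-1)}(v))$. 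Since $\p\nmid m$, this is a tamely ramified situation: the inertia group at a prime of $L_n$ above $\p$ acts on these $m$ children through cycles of common length $m/\gcd(m,e)$, which is a multiple of $\ell$ because $\ell\mid m$ and $\ell\nmid e$.

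Consequently, for infinitely many $n$ the group $G_{0,n}=\Gal(f^{\circ n}(X)/K)$ contains an element $\tau$ (a generator of such an inertia group) whose local action on the $d$ children of the node $v'$ is nontrivial, has a cycle of length divisible by $\ell$, and fixes at least one of the $d-m\ge1$ children not reducing to $\bar\alpha$; in particular $\tau$ acts at $v'$ as a nontrivial permutation with a fixed point. On the other hand, \cite{Koe24} shows (and this is what we use) that a positive density set of stable primes for $(f,0)$ forces, for all large $n$, every local action of $G_{0,n}$ on the $d$-regular rooted tree to be a cyclic subgroup of $S_d$; since $(f,0)$ is stable, $G_{0,n}$ is moreover transitive on the children of every node, so each such local action is generated by a $d$-cycle, and every nontrivial element of it is fixed-point-free. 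This contradicts the existence of $\tau$, and therefore the set of stable primes of $f$ has density $0$.

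The decisive and most delicate step is the construction of the primes $\p$ in the second paragraph: producing, for infinitely many $n$, a prime of $L_n$ whose inertia is tame, nontrivial, and concentrated — with an orbit of length divisible by $\ell$ — below a single node of the arboreal tree. Existence of an orbit-primitive prime divisor of $f^{\circ n}(\alpha)$ is the classical starting point; the extra requirement of avoiding the ``interference'' primes — those modulo which the forward orbit of $v$ already passes through a critical point of $f$, which would make $\bar v$ a multiple root of $f^{\circ(n-1)}(Y)$ and blur the localization — is the genuine technical obstacle, and is exactly where the strength of the hypothesis ($\ell\mid m$ with $\ell\nmid d$, rather than merely $m<d$) is exploited to keep the conclusion robust. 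When $\bar v$ is unavoidably a multiple root (as happens, e.g., when $v$'s orbit contains another critical point), one instead works at the last node along the way whose reduction is still a simple root and checks that a tame inertia cycle of length a multiple of $\ell$ persists there; I expect this to be the bulk of the work in a complete proof.
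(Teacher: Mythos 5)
Your arithmetic construction is in the right spirit (it closely parallels the paper's Theorem \ref{thm:tech_strong}: meet the critical value deep in the tree at a prime where the orbit valuation is coprime to $\ell$, then read off a tame inertia cycle of length divisible by $\ell$ via the specialization inertia theorem), but the endgame is based on a misstated group-theoretic input, and this is a genuine gap. What \cite{Koe24} (Proposition \ref{prop:stabpr_gp}, Corollary \ref{cor:stabpr}) gives under a positive density of stable primes is \emph{not} that the local actions on the children of a node are cyclic with all nontrivial elements fixed-point-free; it is only that, for all but finitely many levels, these local actions have all cycle types realizable inside an iterated wreath product $AGL_1(p_r)\wr\dots\wr AGL_1(p_1)$ with $\prod p_i=\deg(f)$, and that the kernels $\mathrm{Gal}(K_n/K_{n-1})$ are elementary abelian of exponent dividing a prime factor of $\deg(f)$. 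Groups like $AGL_1(p)$ are full of nontrivial elements with a fixed point (any element of order dividing $p-1$), and an element whose cycle type is ``fixed points plus cycles of length divisible by $\ell$'' is in general perfectly possible in such wreath products even when $\ell\nmid\deg(f)$. So the contradiction you draw from ``$\tau$ acts nontrivially at $v'$ with a fixed point'' evaporates. To make the argument close, you must place an element of order divisible by $\ell$ in the block \emph{kernel} $\mathrm{Gal}(K_n/K_{n-1})$ (then Corollary \ref{cor:stabpr}a applies), not merely in a block stabilizer; the paper is explicit about this distinction (see the discussion before Theorem \ref{thm:tech_stronger}, where stabilizer-only information requires extra cycle-type arguments as in Theorem \ref{thm:main_extra}). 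Your construction does not ensure kernel membership: the inertia element at $\mathfrak{p}$ may already act nontrivially on level $n-1$, e.g.\ because \emph{other} critical orbits meet $0$ modulo $\mathfrak{p}$ — your primitivity condition only controls the orbit of $\alpha$. The paper circumvents this cleanly: if the ramification index of $p$ in $K_{n-1}$ is divisible by $q$, pass to the smallest such level and extract the kernel element there; otherwise a suitable power of the inertia generator lies in $\ker(\pi_n)$.

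The second gap is the arithmetic input itself. You need, for infinitely many $n$, a prime with $\nu_{\mathfrak p}\bigl(f^{\circ (n-1)}(v)\bigr)$ positive and coprime to $\ell$; you present this as ``some additional work'' on top of primitive prime divisors obtained from height growth. Neither piece is adequate: unconditional existence of primitive prime divisors for \emph{all} large $n$ is not known in this generality (only under $abc$, cf.\ \cite{GNT}), and, more importantly, primitivity says nothing about the valuation being coprime to $\ell$ — a primitive prime could divide the orbit term to exactly $\ell$-th power order. The coprime-valuation statement is precisely \cite[Lemma 12]{Betal} (Lemma \ref{lem:betal} here), whose proof rests on Darmon–Granville; it yields only infinitely many $n$, which is all the paper needs. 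Finally, two smaller points: you implicitly treat $f^{\circ n}(\alpha)$ as an algebraic integer, whereas $f$ is a rational function (the paper homogenizes and excludes resultant primes), and your deferred ``interference'' analysis (working at the last node whose reduction is a simple root) is not needed once the argument is run through ramification indices at lower levels as above.
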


An important aspect of Theorem \ref{thm:main} arises due to the following easy observation: let $a\in K$ be arbitrary and $f_a(X):=(X-a)\circ f\circ (X+a)$ the conjugate of $f$ by the linear shift $X\mapsto X+a$. Then $f_a^{\circ n}(X) = f^{\circ n}(Y) - a$ where $Y:=X+a$. Therefore (mod-$p$) stability of $f_a$ is equivalent to (mod-$p$) stability of $(f,a)$. Moreover, via subtracting $a$ from each element, the orbit of a point $\alpha\in K$ under $f$ corresponds one-by-one to the one of $\alpha-a$ under $f_a$, yielding in particular natural correspondences of preperiodic points, wandering points, etc. This means that, whenever Theorem \ref{thm:main} applies to a function $f$, it automatically also applies to $f_a$ for any $a\in K$, yielding the same density-zero conclusion for all $(f,a)$, $a\in K$, simultaneously. This is important to keep in mind, since many problems in arithmetic dynamics, such as the aforementioned ``finite index" expectations, are known to be very difficult to solve simultaneously for all fibers of a given map, and even exhibiting infinitely many ``good" values $a$ for a given $f$ is challenging. 

Previously, another sufficient condition for density zero conclusions depending only on the multiplicities of wandering critical points, and thus applicable for all specialization values $a$ simultaneously, has occurred in \cite[Theorem 1.2]{MOS} and in strengthened form in \cite{OS}.  The clear upshot of Theorem \ref{thm:main} is that it requires information about the multiplicity of only {\it one} such critical point, not about all of them at once. 

Theorem \ref{thm:main} in particular applies to all postcritically infinite polynomials of prime degree, apart from unicritical polynomials. The latter can however be dealt with separately, and in fact, for unicritical polynomials we achieve a strict dichotomy between the postcritically finite and infinite case via the following.
\begin{theorem}
\label{thm:unicrit}
Let $f\in K[X]$ be a unicritical polynomial, i.e., $f=\lambda\circ X^d\circ \mu$ for linear polynomials $\lambda,\mu\in K[X]$. Then the following hold:
\begin{itemize}
\item[1)] If $f$ is postcritically infinite, then the set of stable primes of $(f,a)$ is of density zero, for all $a\in K$.
\item[2)] If $f$ is postcritically finite, then the set of stable primes of $(f,a)$ is of positive density, for all $a\in K$ outside of a ``thin set in the sense of Serre".\footnote{I.e., the union of a finite set and finitely many value sets $f_i(X_i(K))$ under morphisms $f_i: X_i\to \mathbb{P}^1_K$ ($i=1,\dots, n$) of degree $\ge 2$, with irreducible curves $X_i$ over $K$; see \cite[Definition 3.1.1]{Serre}.}
\end{itemize}
\end{theorem}
Note that Assertion 2) yields positive density of stable primes in ``most" fibers of $f$, since thin sets are small in the sense that they contain $0\%$ of all points of $K$ when counted by height; see, e.g., \cite[Proposition 3.4.2]{Serre}.

Combining Theorems \ref{thm:main} and Assertion 1) of \ref{thm:unicrit}, we then have the following corollary.
  \begin{corollary} 
  \label{thm:primedeg}
  Let $f\in K[X]$ be a postcritically infinite polynomial of prime degree. 
  Then the set of stable primes of $f$ is of density zero.
  \end{corollary}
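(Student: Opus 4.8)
Put $p:=\deg f$, a prime. Since $f$ is a polynomial, $\infty$ is a totally ramified fixed point of $f$, hence a preperiodic critical point of multiplicity $p$. As $f$ is postcritically infinite, it must therefore have a \emph{finite} wandering critical point $\alpha$; let $m$ be its multiplicity under $f$, so $2\le m\le p$. If $m<p$, then every prime divisor of $m$ is $\le m<p$, hence coprime to $\deg f$, and Theorem~\ref{thm:main} applies verbatim and gives density $0$. So I may assume $m=p$; then $f(X)-f(\alpha)=c(X-\alpha)^{p}$ with $c$ the leading coefficient of $f$, i.e.\ $f(X)=c(X-\alpha)^{p}+\beta$ with $\beta:=f(\alpha)$, and a Riemann--Hurwitz count (the ramification divisor of $f$ has degree $2p-2$, already accounted for by the two totally ramified points $\infty$ and $\alpha$) shows that $\alpha$ and $\infty$ are the only critical points of $f$. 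The one remaining case is thus: $f$ is affine-unicritical of prime degree $p$ and its unique finite critical point $\alpha$ is wandering; density $0$ must still be established here.

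\textbf{Step 2: the affine-unicritical case.} Here I would argue by iterated Kummer theory over the tower $K=:K_{0}\subseteq K_{1}\subseteq\cdots$, where $K_{n}=K(\theta_{n})$ for $\theta_{n}$ a root of $f^{\circ n}(X)$. After discarding the degenerate sub-case in which $0$ lies in the forward orbit of $\alpha$ (then $\alpha$ is a multiple root of some $f^{\circ k}(X)$, so $f$ is already non-stable modulo all but finitely many primes, and there is nothing to prove), each $f^{\circ n}(X)$ is separable, and for $q$ outside a finite bad set the reduction $\bar f$ is again of the form $\bar c(X-\bar\alpha)^{p}+\bar\beta$. Climbing the tower and using the classical fact that $X^{p}-t$ is irreducible over a field exactly when $t$ is not a $p$-th power there, one finds that $q$ is a stable prime for $(f,0)$ precisely when the Frobenius at $q$ acts as a single $p^{n}$-cycle on the $p^{n}$ roots of $f^{\circ n}$ for every $n$. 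By the Chebotarev density theorem, the density of such $q$ equals the Haar measure, inside the arboreal group $G:=\varprojlim_{n}\Gal(f^{\circ n}(X)/K)$ (a closed subgroup of the automorphism group of the $p$-ary rooted tree), of the closed set of elements inducing a $p^{n}$-cycle at every level. The step from level $n$ to level $n+1$ is governed by the Kummer element $u_{n}:=(\theta_{n}-\beta)/c\in K_{n}^{*}$: if $u_{n}\in (K_{n}^{*})^{p}$ then $f^{\circ(n+1)}(X)$ is already reducible over $K$ and we are finished, while each genuine Kummer step contracts the relevant conditional measure by a factor bounded away from $1$ (essentially $1-\tfrac1p$, up to cyclotomic corrections that stay uniformly bounded). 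It therefore suffices that $u_{n}\notin (K_{n}^{*})^{p}$ for infinitely many $n$. Taking norms down to $K$, one has $N_{K_{n}/K}(u_{n})\equiv f^{\circ(n+1)}(\alpha)\pmod{(K^{*})^{p}}$ up to an explicit constant, so it is enough that $f^{\circ m}(\alpha)$ avoids a fixed coset of $(K^{*})^{p}$ for infinitely many $m$; and since the post-critical orbit $\{f^{\circ k}(\alpha)\}_{k\ge1}$ is infinite, this follows from the existence of primitive prime divisors in polynomial orbits --- infinitely many $f^{\circ m}(\alpha)$ carry a prime of $O_{K}$, not dividing earlier orbit terms, at which the valuation is coprime to $p$. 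Hence the Haar measure, and the density of stable primes, is $0$.

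\textbf{Main obstacle.} Step~1 is essentially bookkeeping around Theorem~\ref{thm:main}; the substance is Step~2, and within it the delicate point is the passage from ``the post-critical orbit of $f$ generates an infinite subgroup of $K^{*}/(K^{*})^{p}$'' to ``infinitely many Kummer steps of the arboreal tower are genuine'' --- that is, ruling out the ways in which the iterated radical extensions could collapse. Note that the arboreal group of an affine-unicritical degree-$p$ polynomial always sits inside an iterated wreath product of \emph{cyclic} groups (so the group-theoretic input of Theorem~\ref{thm:main} alone cannot decide this borderline case), and one must argue within that class; this is already a subtlety for $p=2$ in the classical analysis of $X^{2}+c$. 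It is precisely here that the primitive-prime-divisor input, rather than the bare infinitude of the orbit, is needed, and one must also take care of the cyclotomic contributions to $G$ and of the finitely many primes of bad reduction.
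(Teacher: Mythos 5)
Your Step 1 is correct and is essentially the paper's own reduction: if the (necessarily existing) finite wandering critical point has multiplicity $m<p$ you invoke Theorem \ref{thm:main}, and the only leftover case is $f$ affine-unicritical, $f(X)=c(X-\alpha)^p+\beta$ with $\alpha$ wandering, which the paper likewise singles out and defers to Theorem \ref{thm:unicrit}(1).

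The gap is in Step 2, and it is the pivotal claim that ``each genuine Kummer step contracts the relevant conditional measure by a factor bounded away from $1$ (essentially $1-\tfrac1p$).'' That principle is false: genuineness of every step of the tower (equivalently, irreducibility of all iterates over $K$) does not force the proportion of full $p^n$-cycles to decay. Within the very same unicritical family, PCF examples such as $X^p$ or $1-X^3$ with $a$ outside a thin set have every Kummer step genuine and nevertheless a \emph{positive} density of stable primes --- this is exactly Theorem \ref{thm:unicrit}(2) and the appendix remark; group-theoretically, Proposition \ref{prop:stabpr_gp} yields contraction only when the kernel is non-elementary-abelian or elementary abelian of \emph{maximal} order $p^{[G:V]}$, whereas a nontrivial Kummer step only guarantees a nontrivial kernel. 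So your argument proves too much, and the primitive-prime-divisor input is aimed at the wrong target: you use it only to secure that infinitely many steps are genuine (which you also flag as the main obstacle), but even granting that, no density-zero conclusion follows. What is actually needed, and what the paper does, is to convert mod-$q$ stability into residue conditions over the base field: by \cite[Theorem 13]{DDetal}, a stable prime must be inert in each cyclic extension $K(\zeta_q,\sqrt[q]{(f^{\circ n}(v)-a)/u})/K(\zeta_q)$ indexed by the critical orbit, and Lemma \ref{lem:betal} supplies, for infinitely many $n$, a prime ramified in the $n$-th such extension but in none of the earlier ones, forcing infinitely many of these extensions to be linearly disjoint; being inert in all of infinitely many linearly disjoint cyclic extensions is a density-zero condition by Chebotarev. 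In other words, the primitive-divisor/valuation-coprime-to-$p$ input must be used to establish \emph{independence (linear disjointness) of the level-wise Frobenius conditions}, not mere non-collapse of the radical tower; your norm computation $N_{K_n/K}(u_n)\equiv f^{\circ(n+1)}(\alpha)$ modulo $p$-th powers only delivers the latter. (A secondary, fixable point: for postcritically infinite $f$ infinitely many primes ramify in the tower, so the identification of the density of stable primes with a Haar measure needs to be replaced by an upper bound through finite levels.)
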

  
The following consequence of Theorem \ref{thm:main} applies to ``almost all" polynomials (in a density sense) of any given odd degree. 
\begin{corollary}
\label{cor:odddeg}
Let $f\in K[X]$ be such that $f' = h(X)^2g(X)$ with $g\in K[X]$ squarefree of even degree $\deg(g)>0$, and such that at least one root of $g$ is a wandering point of $f$.
Then the set of stable primes of $f$ is of density zero.\\
In particular, if $f\in K[X]$ is a postcritically infinite polynomial of odd degree such that either
\begin{itemize}
\item[i)] $f'\in K[X]$ is squarefree, or 
\item[ii)] $f'$ is a non-square in $\overline{K}[X]$ and no finite critical point is preperiodic,
\end{itemize}
then the set of stable primes of $f$ is of density zero.
\end{corollary}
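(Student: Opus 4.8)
The plan is to deduce everything from Theorem~\ref{thm:main} by producing, in each case, a wandering critical point whose multiplicity under $f$ is divisible by the prime $2$, while checking that $\deg(f)$ is odd. For the first assertion, start from the factorization $f'=h(X)^2g(X)$. Since $\deg(f')=2\deg(h)+\deg(g)$ and $\deg(g)$ is even, $\deg(f)=\deg(f')+1$ is odd, so $2$ is coprime to $\deg(f)$. By hypothesis there is a root $\alpha\in\overline K$ of $g$ that is a wandering point of $f$; being a root of $f'$ it is a finite critical point, and $f$ is postcritically infinite because it possesses a wandering critical point. As $g$ is squarefree, $v_\alpha(g)=1$, hence $v_\alpha(f')=2v_\alpha(h)+1$, and in characteristic zero the multiplicity of $\alpha$ under $f$ equals $v_\alpha(f')+1=2\bigl(v_\alpha(h)+1\bigr)$, which is even. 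Thus the hypotheses of Theorem~\ref{thm:main} are met and the set of stable primes of $f$ has density $0$.

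For the ``in particular'' part, I would reduce both situations to the first assertion, using the elementary remark that for a polynomial the point $\infty$ is fixed, hence a preperiodic critical point; consequently a postcritically infinite polynomial always has a \emph{finite} critical point (a root of $f'$) that is wandering. In case (i) take $h=1$ and $g=f'$: then $g$ is squarefree, $\deg(g)=\deg(f)-1$ is even because $\deg(f)$ is odd, $\deg(g)>0$ since a postcritically infinite map has degree $\ge 2$ (a degree-one map has no critical points and is vacuously postcritically finite), and some root of $g=f'$ is wandering by the remark. In case (ii), factor $f'=c\prod_jP_j^{a_j}$ with the $P_j$ distinct monic irreducibles over $K$ and $c\in K^\times$, and set $g:=c\prod_{a_j\text{ odd}}P_j$ and $h:=\prod_jP_j^{\lfloor a_j/2\rfloor}$, so that $f'=h^2g$ with $g\in K[X]$ squarefree. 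Here $\deg(g)\equiv\deg(f')=\deg(f)-1\equiv 0\pmod 2$, and $g$ is non-constant precisely when $f'$ is not a square in $\overline K[X]$: since $K$ is perfect each $P_j$ is separable, so $f'$ is a square in $\overline K[X]$ exactly when every $a_j$ is even (the leading constant is automatically a square in $\overline K$). Finally every root of $g$ is a root of $f'$, hence a finite critical point, hence by hypothesis not preperiodic, i.e.\ wandering. In either case all hypotheses of the first assertion hold.

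I do not expect a genuine obstacle here: the argument is essentially bookkeeping of root multiplicities and degree parities, the only conceptual ingredients being Theorem~\ref{thm:main} itself, the observation that $\infty$ is automatically a preperiodic critical point of a polynomial (so postcritical infiniteness must be witnessed by a finite critical point), and separability of irreducible polynomials over the perfect field $K$ (used to characterise when $f'$ is a square over $\overline K$). The step needing the most care is the descent in case (ii): one must choose the ``odd part'' $g$ over $K$, not merely over $\overline K$, and verify that it still has even degree.
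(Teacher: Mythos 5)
Your proposal is correct and follows essentially the same route as the paper: a root of $g$ has odd multiplicity in $f'=h^2g$, hence even multiplicity under $f$, while the hypotheses force $\deg(f)$ to be odd, so Theorem~\ref{thm:main} applies with the prime $2$. Your extra bookkeeping for cases (i) and (ii) (taking $g=f'$, respectively the squarefree "odd part" of $f'$ over $K$, and noting that $\infty$ is preperiodic so a wandering critical point must be finite) is exactly the reduction the paper leaves implicit.
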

The above result should be compared with \cite[Theorem 1.2]{MOS} (see also \cite[Theorem 1.2]{OS} for an improvement of the asymptotic) which treats the case $\deg(g)=1$ (in which case, $f$ is necessarily of even degree). Note, however, that $\deg(g)=1$ is a ``nongeneric" assumption unless $\deg(f)=2$, since a generic polynomial has $g=f'$.

The following result about (among others) trinomials should be compared with \cite[Corollary 1.3]{MOS}, which treats (although with different methods) the situation analogous to the special case $k=d-1$ of part b) for {\it even} degree $d$.
\begin{corollary}
\label{cor:trinom}
The set of stable primes of $f$ is of density zero in each of the following two cases.
\begin{itemize}
\item[a)]
$f(X)=g(X)\cdot (X-a)^k + b$ for some polynomial $g\in K[X]$ and $a,b\in K$ such that $a$ is a wandering point of $f$, $g(a)\ne 0$, and $k$ is divisible by a prime coprime to $\deg(g)$. 
\item[b)]
$f(X)=r(X-a)^d+s(X-a)^{k}+b\in K[X]$, with $a,b,r,s\in K$, $r,s\ne 0$, such that $d$ is odd, $1<k<d$ is coprime to $d$, and 
at least one of 
the points $\gamma_0:=a$ and $\gamma_i:= a+\zeta_{d-k}^i \cdot \sqrt[d-k]{\frac{-ks}{dr}}$, $i=1,\dots, d-k$ 
is a wandering point of $f$. 
\end{itemize}
\end{corollary}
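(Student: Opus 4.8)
The plan is to derive both parts as immediate consequences of Theorem \ref{thm:main}, by exhibiting in each case a critical point of $f$ that is wandering and whose multiplicity under $f$ is divisible by a prime coprime to $\deg f$. Observe first that the existence of such a \emph{wandering} critical point already forces $f$ to be postcritically infinite, so that hypothesis of Theorem \ref{thm:main} requires no separate verification; the argument thus reduces to locating the critical points of $f$ and computing their multiplicities.

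For part a) I would take $\alpha:=a$. Since $g(a)\neq 0$, we have $f(X)-f(a)=f(X)-b=g(X)\,(X-a)^k$, so $a$ is a point of multiplicity exactly $k$ under $f$; as $k$ is divisible by a prime, $k\ge 2$, so $a$ is genuinely a critical point, and it is wandering by hypothesis. It only remains to note that the prime $p$ dividing $k$ furnished by the hypothesis (coprime to $\deg g$) is automatically coprime to $\deg f=\deg g+k$: if $p\mid\deg g+k$, then $p\mid k$ would give $p\mid\deg g$, a contradiction. Hence Theorem \ref{thm:main} applies.

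For part b) I would set $u:=X-a$, so that $f=r\,u^d+s\,u^k+b$ and $f'(X)=u^{k-1}\bigl(dr\,u^{d-k}+ks\bigr)$. The critical points of $f$ are therefore exactly $\gamma_0=a$ (the root $u=0$) together with the $d-k$ roots $\gamma_1,\dots,\gamma_{d-k}$ of $u^{d-k}=-ks/(dr)$; since $r,s\neq 0$ this quantity is nonzero, so each $\gamma_i$ with $i\ge 1$ is a \emph{simple} root of $f'$, hence a point of multiplicity exactly $2$ under $f$. On the other hand $f(X)-f(a)=u^k\bigl(r\,u^{d-k}+s\bigr)$ takes the nonzero value $s$ at $u=0$, so $a$ has multiplicity exactly $k$ under $f$. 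By hypothesis one of $\gamma_0,\dots,\gamma_{d-k}$ is wandering: if it is some $\gamma_i$ with $i\ge 1$, its multiplicity $2$ is divisible by the prime $2$, which is coprime to $\deg f=d$ since $d$ is odd; if it is $\gamma_0=a$, its multiplicity $k>1$ is divisible by some prime $p$, coprime to $d$ because $\gcd(k,d)=1$. In either case the hypotheses of Theorem \ref{thm:main} are met, and the conclusion follows.

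The only real work here is bookkeeping: reading off the critical points from the factorization of $f'$ and checking that the resulting multiplicities (namely $k$ at the ``central'' critical point, and $2$ at the remaining critical points in case b)) fall in a residue class coprime to $\deg f$. I anticipate no genuine obstacle; the force of the corollary lies entirely in Theorem \ref{thm:main}, with the hypotheses on $k$ (resp.\ on $d$ being odd and coprime to $k$) tailored precisely so that the divisibility condition of that theorem is satisfied.
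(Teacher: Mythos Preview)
Your proof is correct and follows essentially the same approach as the paper: both parts are reduced directly to Theorem \ref{thm:main} by identifying $a$ (resp.\ the $\gamma_i$) as critical points of multiplicity $k$ (resp.\ $k$ and $2$) and checking these are coprime to $\deg f$. Your write-up is in fact more explicit than the paper's in verifying that the prime dividing $k$ in part a) is coprime to $\deg f=\deg g+k$ (the paper leaves this implicit) and in confirming the $\gamma_i$ are simple roots of $f'$.
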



  The basic underlying idea for the proof of our main results is the observation that if a polynomial $f$ (with leading coefficient not divisible by $p$) is irreducible modulo a prime $p$, then $p$ is unramified in the splitting field of $f$ with Frobenius acting on the roots of $f$ as a ``full cycle" (i.e., a cycle of length the degree of the polynomial). On the other hand, Chebotarev's density theorem asserts that the proportion of such cycles in the Galois group of $f$ equals the density of primes $p$ possessing such a cycle as Frobenius. In particular, if one can show that the proportion of full cycles in $G_{a,n}:=\textrm{Gal}(f^{\circ n}(X)-a/K)$ converges to zero as $n\to \infty$, then the proportion of primes modulo which $f^{\circ n}(X)-a$ is irreducible must converge to zero as $n\to \infty$, implying in particular that the density of stable primes of $(f,a)$ is zero. The main group-theoretical tools to derive such density zero results are recalled in Section \ref{sec:stabpr_gp}. Notably, the action of the Galois group $G_{a,n}$ on roots of $f^{\circ n}(X)-a$ is imprimitive.
  Proposition \ref{prop:stabpr_gp} asserts a shrinking of the proportion of full cycles in such imprimitive groups (when compared with the action on blocks), unless the group itself, and notably the block kernel, is of a very special shape. Corollary \ref{cor:stabpr} then concludes that (the proportion of full cycles in $G_{a,n}$ converges to zero as $n\to \infty$, hence) the set of stable primes is of density zero, as long as, for infinitely many $n$, there exist certain group elements which prevent the aforementioned special shape. The emphasis on individual group elements here is due to the next ingredient of the proof.
   Namely, to show that the iterated Galois groups $G_{a,n}$ fulfill the required conditions, we use the inertia group generators at ramified primes in the splitting fields of $f^{\circ n}(X)-a$. Controlling these inertia groups requires certain results about ramification in specializations of function field extensions (Theorem \ref{thm:spec_in}), as well as about the prime divisors of dynamical sequences (Lemma \ref{lem:betal}).
   
  After this general setup, our main results will be derived in Section \ref{sec:proofs}.  
The potential of our method however goes beyond the results stated so far. A strong technical result, useful for generalizing Theorem \ref{thm:main}, is contained in Theorem \ref{thm:tech_stronger}. While translating this theorem one-by-one into a property of polynomials in the style of Theorem \ref{thm:main} may be difficult, we give a sample application, which is not covered yet by Theorem \ref{thm:main} (and in particular generalizes Corollary \ref{cor:trinom}a)).
\begin{theorem}
\label{thm:main_extra}
Let $f(X) = g(X)(X-a)^k+b \in K[X]$, where $a,b\in K$, $g\in K[X]$ is separable with $g(a)\ne 0$, $a$ is a wandering point of $f$ and $k$ is an odd integer not dividing $\deg(f)$. Then the set of stable primes of $f$ is of density zero.
\end{theorem}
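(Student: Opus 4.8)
The plan is to exhibit $f$ as a special case covered by Theorem~\ref{thm:tech_stronger}. That theorem already contains the machine that converts suitable ramification data attached to a wandering critical point into the density-$0$ conclusion — on the group side Corollary~\ref{cor:stabpr} (resting on Proposition~\ref{prop:stabpr_gp}), on the arithmetic side Lemma~\ref{lem:betal} for the prime divisors of the dynamical orbit and Theorem~\ref{thm:spec_in} for ramification surviving specialization — so the work here is (i) to read off the ramification data of $f(X)=g(X)(X-a)^k+b$, and (ii) to check that the numerical input required by Theorem~\ref{thm:tech_stronger} amounts, for this $f$, exactly to ``$k$ odd and $k\nmid\deg(f)$''.

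First I would record the local structure of $f$ at its critical value $b=f(a)$. From $f(X)-b=g(X)(X-a)^k$ and $g(a)\ne 0$ we see that the fibre $f^{-1}(b)$ consists of $a$ with multiplicity $k$ together with the roots of $g$; since $g$ is separable these roots are pairwise distinct of multiplicity $1$, so over $b$ the map $f$ has ramification type exactly $(k,1^{\deg(f)-k})$ and, in particular, $a$ is the \emph{only} critical point of $f$ lying over $b$ (note also $k\ge 3$, as $k$ odd and $k\nmid\deg(f)$ force $k\ne 1$). Because $a$ is a wandering point, the points $b,\,f(b)=f^{\circ 2}(a),\,f^{\circ 2}(b)=f^{\circ 3}(a),\dots$ in the forward orbit of $b$ are pairwise distinct; each $f^{\circ j}(b)=f^{\circ(j+1)}(a)$ is then a branch point of every iterate $f^{\circ n}$ with $n\ge j+1$ (as $a$ is a critical point of $f^{\circ(j+1)}$), and in the geometric tower $\overline{K}(t)\subset\overline{K}(t)\bigl[x:f^{\circ n}(x)=t\bigr]$ the inertia over $t=f^{\circ j}(b)$ is tame cyclic of order $k$, acting on the $\deg(f)^{\,n}$-element generic fibre as $\deg(f)^{\,n-1-j}$ disjoint $k$-cycles (and fixed points otherwise). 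It is this precise profile, not just ``there is a $k$-cycle'', that is pinned down by the separability of $g$, and it is exactly this extra precision that places Theorem~\ref{thm:main_extra} beyond Theorem~\ref{thm:main}: one no longer needs a prime divisor of $k$ coprime to $\deg(f)$. Orbit points $f^{\circ j}(b)$ that happen also to be critical values of $f$ only enlarge the inertia, so a suitable power of the corresponding generator still has all-$k$-cycle type; this case I would dispose of in a short remark rather than in the main line.

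With this data in hand I would apply Theorem~\ref{thm:tech_stronger}, whose hypothesis is met precisely because $k$ is odd and $k\nmid\deg(f)$. Unwinding it: Theorem~\ref{thm:spec_in} transports the geometric ramification above $t=f^{\circ j}(b)$ to the arithmetic arboreal tower over $K$ at every prime $p$ at which $0$ specializes onto this branch point with $k\nmid v_p\bigl(f^{\circ j}(b)\bigr)$ — automatic as soon as $v_p\bigl(f^{\circ j}(b)\bigr)=1$, since $k\ge 3$ — so that the decomposition group at $p$ contains an element of the above $k$-cycle type; Lemma~\ref{lem:betal}, applied to the dynamical sequence $\bigl(f^{\circ j}(b)\bigr)_{j\ge 0}$, supplies, level by level, enough such primes that the proportion of primes escaping all of these constraints tends to $0$; and Corollary~\ref{cor:stabpr} delivers the contradiction, since a stable prime would force the Frobenius to act as a single $\deg(f)^{\,n}$-cycle on level $n$ for all $n$, which is incompatible with the presence of such $k$-cycle elements once $k\nmid\deg(f)$.

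The step I expect to be the genuine obstacle is this last matching: verifying that the profile extracted from $f$ — the full type $(k,1^{\deg(f)-k})$ over $b$, and its propagation along the orbit — is \emph{verbatim} what Theorem~\ref{thm:tech_stronger} asks for, and that the numerical condition stated there collapses, for this profile, to exactly ``$k$ odd, $k\nmid\deg(f)$''. The remaining points — the bookkeeping for orbit points that are also critical values, the finitely many primes of bad reduction, and the tameness condition $p\nmid k$ — are clerical and do not affect the density-$0$ verdict.
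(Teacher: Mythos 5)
Your setup is the same as the paper's: read off that the inertia generator over $t\mapsto b$ is a single $k$-cycle $\sigma$ (here separability of $g$ and $g(a)\ne 0$ matter), feed the wandering orbit of $b$ into Lemma \ref{lem:betal} and Theorem \ref{thm:spec_in} via Theorem \ref{thm:tech_stronger}, and finish with Corollary \ref{cor:stabpr}. But the step you defer as ``the genuine obstacle'' is not a matching exercise -- it is the actual content of the proof, and it is missing. Theorem \ref{thm:tech_stronger} only hands you, for infinitely many $n$, an element $y\in\langle\sigma\rangle$ (up to conjugacy) of order divisible by a chosen prime power $q$ inside the single-level relative group $H_n=\mathrm{Gal}(f(X)-\alpha_{n-1}/K(\alpha_{n-1}))$; its hypotheses have nothing to do with $k$ being odd or $k\nmid\deg(f)$. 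To conclude via Corollary \ref{cor:stabpr}b) one must show that the resulting cycle type cannot occur in \emph{any} iterated wreath product $AGL_1(p_r)\wr\dots\wr AGL_1(p_1)$ with $p_1\cdots p_r=\deg(f)$. The paper does this by choosing an odd prime power $q=p^e$ dividing $k$ but not $\deg(f)$, observing that the elements of order $q$ in $\langle\sigma\rangle$ are products of an \emph{odd} number of disjoint $q$-cycles (here oddness of $k$ enters), and then proving a parity lemma: in such a wreath product, any element all of whose cycle lengths are $1$ or $q$ has an \emph{even} number of $q$-cycles, because for each $q$-cycle the coarsest level on which it moves points must carry an $AGL_1(p_{i(c)})$-cycle type $(1,p,\dots,p)$, forcing $p_{i(c)}\equiv 1 \pmod{2p}$ and an even number of $p$-cycles there, each lifting to the same number of $q$-cycles. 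None of this combinatorial argument appears in your proposal, and without it the theorem is not proved.

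Relatedly, your stated mechanism for the contradiction is not the right one: a stable prime does not clash ``directly'' with the presence of a $k$-cycle-type element once $k\nmid\deg(f)$. The density-zero conclusion comes from Proposition \ref{prop:stabpr_gp}/Corollary \ref{cor:stabpr}, i.e.\ from the fact that a cycle type excluded from all iterated wreath products of $AGL_1$'s, occurring in the relative groups infinitely often, forces the proportion of full cycles in $G_n$ to tend to $0$; and the exclusion itself genuinely uses the oddness of $k$ (the parity argument breaks for even $k$), not merely $k\nmid\deg(f)$. Also note a smaller slip: the element produced need not have the full type $(k,1^{\deg(f)-k})$ you describe -- it is only some power of $\sigma$ of order divisible by $q$, which is exactly why the analysis is carried out for products of $q$-cycles rather than for $k$-cycles. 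So the route is the paper's route, but the proof's essential lemma is absent.
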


  For some further applications even to certain cases where all the multiplicities under $f$ of critical points divide the degree, see Lemma \ref{lem:more_ex}. In fact, we conjecture:

  \begin{conjecture}
  \label{conj:bold}
  For every postcritically infinite polynomial $f\in K[X]$ over a number field $K$, the set of stable primes is of density zero.
  \end{conjecture}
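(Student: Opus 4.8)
We first record that, by the conjugation trick described after Theorem~\ref{thm:main}, Conjecture~\ref{conj:bold} is equivalent to the assertion that for \emph{every} postcritically infinite $f\in K[X]$ and \emph{every} $a\in K$ the set of stable primes of $(f,a)$ has density $0$; so we are free to work with $(f,a)$ for a convenient choice of $a$. We argue by contraposition: suppose $(f,a)$ has a set of stable primes of positive density. By Corollary~\ref{cor:stabpr} (resting on Proposition~\ref{prop:stabpr_gp}), the arboreal group $G_{a,n}=\Gal(f^{\circ n}(X)-a/K)$ then has order at most, asymptotically, that of some iterated wreath product of cyclic groups; equivalently, for all large $n$ the step $K_n/K_{n-1}$ of the preimage tower is, up to bounded index, governed by cyclic quotients. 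If $f$ is postcritically infinite but $G_{a,n}$ \emph{fails} to be this small, we are already done by Theorem~\ref{thm:main} (or its refinement Theorem~\ref{thm:tech_stronger}); the whole difficulty is to show that the smallness forced by positive density is incompatible with $f$ being postcritically infinite.

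The arithmetic engine is the one driving Theorem~\ref{thm:main}. Since $f$ is postcritically infinite, fix a wandering critical point $\alpha$, of multiplicity $e:=e_\alpha\ge 2$, and work over $K(\alpha)$. The forward orbit $\beta_m:=f^{\circ m}(\alpha)$ is infinite, so by Lemma~\ref{lem:betal} the sequence $(\beta_m-a)_m$ has a primitive prime divisor $\p_m$ of $K(\alpha)$ for all $m\gg 0$, and the $\p_m$ are pairwise distinct. At such a prime the congruence $f^{\circ m}(\alpha)\equiv a$ places (the reduction of) $\alpha$ among the $m$-th iterated preimages of $a$, so by Theorem~\ref{thm:spec_in} the step $K_{m+1}/K_m$ acquires, above $\p_m$, an inertia element acting by $e$-cycles on the degree-$\deg f$ fibre, for all but finitely many $m$. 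Each such ramification event forces a congruence constraint on a stable prime $\p$, namely that a suitable resolvent $v_m\in K$ built from the orbit of $\alpha$ fails to be a $q$-th power modulo $\p$ for each prime $q\mid e$. Because the $\p_m$ are distinct and the heights of the $\beta_m$ grow, an enlargement/Chebotarev argument (in the spirit of the known proofs for quadratic maps) shows the $v_m$ are sufficiently independent mod $q$-th powers that each new constraint multiplies the density of the remaining admissible $\p$ by a factor bounded away from $1$; intersecting over all $m$ gives density $0$, contradicting the hypothesis.

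The step that does \emph{not} go through in general --- and this is exactly why Conjecture~\ref{conj:bold} is a conjecture, not a theorem --- is the passage from ``$e$-cycle inertia at $\p_m$'' to a genuine obstruction. When $e$ is divisible by a prime $q\nmid\deg f$, such inertia cannot lie inside a cyclic block of length $\deg f$, which is precisely what produces the contradiction in Theorem~\ref{thm:main}; a mild relaxation (allowing $q\mid\deg f$ while $e$ still fails to divide $\deg f$ appropriately) is what Theorem~\ref{thm:tech_stronger} extracts, as illustrated by Corollary~\ref{cor:trinom} and Theorem~\ref{thm:main_extra}. But if \emph{every} wandering critical point of $f$ has multiplicity all of whose prime factors divide $\deg f$, then $e$-cycle inertia is perfectly compatible with an iterated wreath product of cyclic groups of length $\deg f$, and no purely group-theoretic contradiction remains. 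In that regime one must rule out directly that the arboreal group is ``accidentally'' that small, for which I see three possible routes: (i) exploit that $\alpha$ need not be $K$-rational, so that the inertia above $\p_m$ is twisted by the splitting of $\p_m$ in $K(\alpha)/K$ and may again be incompatible with cyclicity; (ii) force ramification coming from two distinct wandering critical orbits to occur above infinitely many \emph{common} primes, combining their multiplicities to a cycle length not dividing $\deg f$; or (iii) push the radical-tower analysis that already settles prime degree (Corollary~\ref{thm:primedeg}) and the monomial-times-separable cases (Theorem~\ref{thm:main_extra}, Lemma~\ref{lem:more_ex}) to arbitrary composite degree and mixed local structure, tackling the representative open case ``$f'$ squarefree of even degree with a wandering simple critical point'' first. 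The main obstacle is that none of (i)--(iii) is known to work uniformly over all postcritically infinite $f$, and closing that gap is what Conjecture~\ref{conj:bold} asks for.
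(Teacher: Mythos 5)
There is a genuine gap, and your own final paragraph concedes it: the statement you were asked to prove is Conjecture~\ref{conj:bold}, which the paper itself does not prove --- it only offers partial evidence (Theorem~\ref{thm:main} and its corollaries, Theorems~\ref{thm:tech_strong} and~\ref{thm:tech_stronger}, Lemma~\ref{lem:cond}, Lemma~\ref{lem:more_ex}, and the unicritical case in Theorem~\ref{thm:unicrit}). Your first and third paragraphs are essentially an accurate survey of that partial machinery, but the second paragraph, which is where a proof would have to live, does not hold up. You claim that each ramification event produced by Lemma~\ref{lem:betal} and Theorem~\ref{thm:spec_in} forces a congruence condition of the form ``a resolvent $v_m$ is not a $q$-th power mod $\p$'' and that a Chebotarev/independence argument over the distinct primes $\p_m$ then multiplies the density of admissible stable primes by a factor bounded away from $1$, yielding density zero. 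That Kummer-type mechanism is only available in very special situations (it is exactly how the unicritical case is handled in the appendix, via the results of \cite{DDetal}); in general the paper converts ramification into cycle-type information and feeds it into Proposition~\ref{prop:stabpr_gp}/Corollary~\ref{cor:stabpr}, and this obstruction genuinely vanishes when every wandering critical point has multiplicity whose induced inertia cycle type fits inside an iterated wreath product of groups $AGL_1(p_i)$ with $\prod p_i=\deg f$. If your independence argument worked as stated, it would prove the conjecture outright --- contradicting your own (correct) observation two sentences later that no purely group-theoretic or congruence obstruction remains in that regime. So the decisive step is asserted, not proved.

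Two further technical slips in the same paragraph: unconditionally, Lemma~\ref{lem:betal} only yields \emph{infinitely many} $m$ with a prime at which $\nu_\p(f^{\circ m}(\alpha)-a)$ is positive and has the required non-divisibility, not primitive prime divisors for all $m\gg 0$ (the latter is the abc-conditional statement of \cite{GNT} mentioned at the end of Section~\ref{sec:proofs}); and Theorem~\ref{thm:spec_in} gives an inertia subgroup of $I_{t_i}$ of order $|I_{t_i}|/\gcd(|I_{t_i}|,m_\p(a,t_i))$, which need not act by $e$-cycles --- controlling which divisor of $e$ survives is precisely why the hypotheses of Theorem~\ref{thm:main}, Theorem~\ref{thm:tech_stronger} and Lemma~\ref{lem:more_ex} are shaped the way they are. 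In short: what you have written is a reasonable account of why the conjecture is plausible and where the difficulty sits, but it is not a proof, and the paper contains none to compare it against.
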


In the appendix, we give some further motivation for Conjecture \ref{conj:bold} (Appendix \ref{sec:large}), and also include some thoughts on the {\it maximal} possible density of stable primes that a polynomial can have (Appendix \ref{sec:lowdeg}).
  
  We finally remark that the postcritically {\it finite} case, while not the main focus of this paper, also deserves further attention. As evidenced by Assertion 2) of Theorem \ref{thm:unicrit}, there are indeed examples of PCF polynomials with positive density set of stable primes. On the other hand, there are also some density zero results for stable primes of $(f,a)$ for certain PCF polynomials $f$, see, e.g., Proposition 6.1 in combination with Proposition 5.1 of \cite{KNR}. In that case, however, it does not seem feasible (with the methods known to the author) to treat, for a given $f$, all values $a\in K$ at the same time. 
  
  {\bf Acknowledgement}: I thank the two referees for their very detailed and helpful reports. This work was supported by the 2025 Sabbatical Leave Research Grant funded by Korea National University of Education.

  \section{Prerequisites}
 \subsection{Some basics of imprimitive permutation groups}
 \label{sec:prelim_wreath}
Let $f=f_1\circ \dots\circ f_r$ be an irreducible polynomial over a number field $K$, arising as a composition of non-linear polynomials $f_i\in K[X]$, and let $G:=\textrm{Gal}(f/K)$. It is well-known that $G$ acts transitively, but imprimitively, on the roots of $f$, and more precisely embeds as a permutation group into the iterated (imprimitive) wreath product $G_r\wr\dots\wr G_1 \le S_d$, where $d:=\deg(f)$ and $G_i:=\textrm{Gal}(f_i(X)-t/K(t))$  (with a transcendental $t$). 
%

In view of the results recalled in Section \ref{sec:stabpr_gp} (and their application in Section \ref{sec:proofs}), it will be useful to investigate to some extent the possible cycle structures of elements in iterated wreath products. For this, let $H_1\le S_{d_1}$, $\dots$, $H_r\le S_{d_r}$ be transitive permutation groups and $\sigma\in H_r\wr\dots\wr H_{1}\le S_{d_1\cdots d_r}$. Successively projecting $\sigma$ to its image in $H_i\wr\dots\wr H_1$, $i=1,\dots, r$, yields a sequence of partitions $\mathcal{S}_i:=(\lambda_{i,1},\dots, \lambda_{i,N_i})$ for each $i\in \{1,\dots, r\}$, with every $\lambda_{i,j}$ corresponding to a cycle type in $H_i$; 
and furthermore for each $i$ a bijection $\{1,\dots, N_i\}\to \{(a,b)\mid 1\le a\le N_{i-1}, 1\le b\le \#\lambda_{i-1,a}\}$, identifying the unique partition on the $i$-th level which is used to extend the $b$-th entry of the $a$-th partition on the $i-1$-th level.
This fact may be seen purely group-theoretically, but from a number-theoretical point of view may be most conveniently explained as follows: Let $F_0\subset F_1\subset\dots\subset F_r$ be a chain of number fields, with Galois groups of the Galois closure of $F_i/F_{i-1}$ permutation-isomorphic to $H_i$, and let $p$ be a prime of $F_0$ unramified in $F_r$ with Frobenius equal to the conjugacy class of $\sigma$.\footnote{This translation does not lose any generality, since every finite group occurs as a Galois group over some number field $F_0$, and each of its conjugacy classes occurs as the Frobenius at a suitable prime by Chebotarev's density theorem.}
Then the above system of partitions and maps is nothing but the rooted tree of prime ideals extending $p$ in the fields $F_i$, $i=1,\dots, r$, with the partitions taking note of the respective relative degrees in each step. Via running through the paths from the root to the leaves in this tree, every cycle of $\sigma$ is associated to a unique sequence $((a_1,b_1),\dots, (a_r,b_r))$ with $a_i\in \{1,\dots, N_i\}$ and $1\le b_r\le \#\lambda_{i,a_i}$ identifying the elements of $\mathcal{S}_i$, $i=1,\dots, r$, used to compose the respective cycle.
 
Compare also the notion of the ``portrait" of $\sigma$, which is used, mainly in a context where the wreath product is interpreted as a group of automorphisms of a rooted tree, to fully describe $\sigma$ by successively identifying the induced permutations on each level of the tree (e.g., \cite{Bartholdi}), similar to the successive reconstruction of cycle types outlined above.
We will come back to the above setup in the proof of Theorem \ref{thm:main_extra}.
  
  \subsection{Previous results on mod-$p$ stability}
  \label{sec:stabpr_gp}
  The crucial group-theoretical tools for deducing our main results are developed in \cite{Koe24}. We first give them in a purely group-theoretical form (cases a) and b) of the following proposition), before deriving the immediate application to stable primes.
  Case c) of the proposition is comparatively much more elementary (see, e.g., \cite{KNR} for a broad generalization of the underlying idea), but nevertheless useful. To understand the relevance of the setup of Proposition \ref{prop:stabpr_gp}, note that imprimitive permutation groups $G$ such as the Galois groups of iterates $f^{\circ n}(X)-a$ (see Section \ref{sec:prelim_wreath}) are characterized by the point stabilizers $U$ being non-maximal subgroups, cf. \cite[Corollary 1.5A]{DM}, with proper overgroups $V>U$ acting as block stabilizers.  

  \begin{proposition}
  \label{prop:stabpr_gp}
  Let $G$ be a group, $U<G$ a proper subgroup of finite index, and let $V>U$ be a minimal overgroup. Let $G_U$ and $G_V$ be the image of $G$ in the action on cosets of $U$ and of $V$,  respectively, and let $N$ be the kernel of the natural projection $G_U\to G_V$. Finally, let $H$ be the image of $V$ in the action of cosets of $U$ in $V$, and let $c_U$ (resp., $c_V$) $\in [0,1)$ be the proportion of elements of $G_U$ (resp., of $G_V$) acting as ``full cycles" (i.e., elements generating cyclic transitive subgroups). Then in each of the following three cases, there is a constant $\delta<1$, depending only on $[V:U]$ and on the respective case, such that $c_U \le \delta \cdot c_V$:
  \begin{itemize}
  \item[a)] $H$ does not embed (as a transitive subgroup) into $AGL_1(p)$ for any prime $p$, where $AGL_1(p)=C_p\rtimes C_{p-1}$ denotes the normalizer of a $p$-Sylow group in $S_p$.
  \item[b)] $H\le AGL_1(p)$ for some prime $p$, but $N$ is not elementary-abelian.
  \item[c)] $N$ is elementary-abelian of (maximal possible) order $p^{[G:V]}$ (where $p=[V:U]$ is a prime).
  \end{itemize}
  \end{proposition}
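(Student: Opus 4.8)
The plan is to reduce the whole statement to a single ``fibrewise'' estimate and then dispose of the three cases from there. The projection $\pi\colon G_U\to G_V$ is surjective with all fibres of equal size $\abs{N}$, and every full cycle of $G_U$ maps under $\pi$ to a full cycle of $G_V$. Writing $\rho(\bar\tau)$ for the proportion of the fibre $\pi^{-1}(\bar\tau)$ consisting of full cycles of $G_U$, this gives
\[
c_U \;=\; \frac{1}{\abs{G_V}}\sum_{\substack{\bar\tau\in G_V\\ \bar\tau\ \text{full cycle}}}\rho(\bar\tau)
\qquad\text{while}\qquad
c_V \;=\; \frac{1}{\abs{G_V}}\,\#\{\text{full cycles of }G_V\}.
\]
Hence it suffices to produce a $\delta<1$, depending only on $[V:U]$, with $\rho(\bar\tau)\le\delta$ for \emph{every} full cycle $\bar\tau\in G_V$ (if $c_V=0$ there is nothing to prove).

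Next I would make $\rho(\bar\tau)$ explicit. A full cycle $\bar\tau\in G_V$ has order $m:=[G:V]$ and cyclically permutes the $m$ blocks $\Delta_0,\dots,\Delta_{m-1}$ of $G/U$ lying over $G/V$. For $\sigma\in\pi^{-1}(\bar\tau)$ the element $\sigma^m$ lies in $N$, stabilises $\Delta_0$, and acts on it through the primitive group $H$ of degree $s:=[V:U]$; write $\mu(\sigma):=\sigma^m|_{\Delta_0}\in H$. A short orbit count shows that $\sigma$ is a full cycle of $G_U$ if and only if $\mu(\sigma)$ is an $s$-cycle (here one uses that $\bar\sigma=\bar\tau$ is a full cycle, so that $\sigma$ permutes the blocks in a single $m$-cycle). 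Fixing a lift $\sigma^*$ and writing $\pi^{-1}(\bar\tau)=\{n\sigma^*:n\in N\}$, one uses $N\trianglelefteq G_U$ to compute $\sigma^m=\bigl(\prod_{j=0}^{m-1}\phi^j(n)\bigr)(\sigma^*)^m$, where $\phi$ is conjugation by $\sigma^*$; restricting to $\Delta_0$ then gives $\mu(n\sigma^*)=\psi(n)\,c$ for the ``twisted norm'' $\psi\colon N\to H$, $\psi(n):=\bigl(\prod_{j=0}^{m-1}\phi^j(n)\bigr)\big|_{\Delta_0}$, and a constant $c\in H$. Thus
\[
\rho(\bar\tau)\;=\;\Pr_{n\in N}\bigl[\,\psi(n)\,c\ \text{is an }s\text{-cycle in }H\,\bigr],
\]
and the whole problem becomes one of controlling the distribution of $\psi$.

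Case c) I would settle by a direct computation; this is the elementary part. Here $p:=s=[V:U]$ is prime, and $N$, being elementary abelian of the largest possible order $p^m$, is isomorphic to $(\mathbb{Z}/p)^m$, with image $C_p$ inside $H$. One checks that the twisted norm $\psi\colon N\to C_p$ is then a \emph{surjective $\mathbb{Z}/p$-linear functional} --- its value equals $n_0$ plus a linear combination of the remaining coordinates --- so all of its fibres have equal size $p^{m-1}$. Since the $s$-cycles of $H$ lying in this $C_p$ are exactly its nontrivial elements, $\rho(\bar\tau)=\Pr_n[\psi(n)c\ne e]=1-1/p$ for \emph{every} $\bar\tau$. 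Hence $c_U=(1-1/[V:U])\,c_V$, so $\delta=\frac{[V:U]-1}{[V:U]}$ works (and is best possible in this case).

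Cases a) and b) are the group-theoretic heart, and I expect these to be the real obstacle; they are exactly the content of the results of \cite{Koe24}. A self-contained argument in case a) would invoke the classification of primitive permutation groups of degree $s$ that contain an $s$-cycle (going back to Burnside and Feit): apart from transitive subgroups of $AGL_1(p)$, the only possibilities are $S_s$, $A_s$, groups $H$ with $PGL_d(q)\le H\le P\Gamma L_d(q)$ in their action on projective points (whose $s$-cycles are Singer cycles), and three sporadic exceptions of degrees $11,11,23$ --- and in all of these the proportion of $s$-cycles is at most $2/s\le\tfrac12$. One must then show that this robustness survives the twist, i.e.\ that the set $\psi(N)\,c$ cannot concentrate on $s$-cycles, which yields the absolute bound $\delta=\tfrac12$. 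In case b), where $H\le AGL_1(p)$ but $N$ is not elementary abelian, one instead exploits the structure of $N$ as a $G_V$-group: a non-elementary-abelian $N$ cannot feed into the twisted norm as uniformly as the extremal module of case c) does, and this forces $\rho(\bar\tau)\le\delta([V:U])<1$, now genuinely dependent on --- and deteriorating with --- $[V:U]$. The delicate point throughout a) and b) is that when $N$ is a \emph{proper} (and possibly non-abelian) subgroup of the base group, $\psi$ is no longer a homomorphism and $\psi(N)$ need not be a subgroup, so one cannot simply replace $\rho(\bar\tau)$ by ``proportion of $s$-cycles in $H$''; ruling out concentration on $s$-cycles --- using transitivity of $G_U$, primitivity of $H$, and the module structure of $N$ --- is precisely what makes these cases require the machinery of \cite{Koe24}, whereas c) is forced to be an equality.
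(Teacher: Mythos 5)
Your proposal is correct and, at the level of what is actually proved versus cited, runs parallel to the paper: for cases a) and b) the paper simply invokes assertions 2) and 1) of \cite[Proposition 3.2]{Koe24}, and you likewise (and correctly) identify these two cases as precisely the content of \cite{Koe24}. Your sketched ``self-contained'' route for them (the Feit--Jones-type classification of primitive groups containing a full cycle, plus an anti-concentration statement for the twisted coset $\psi(N)c$) is only a plan, and the intermediate claim that the full-cycle proportion is at most $2/s$ in the projective examples is not accurate as stated; but since you use nothing beyond $\delta=\frac12$ there and the paper itself gives no independent argument for a), b), this is not a gap relative to the paper. Where you genuinely differ is case c): the paper argues via conjugacy classes of full cycles (a class of full cycles in $G_V$ has length $|G_V|/[G:V]$, any class of full cycles above it has length $|C|\cdot|N|/p$, and by the lifting argument behind \cite[Proposition 5.1]{KNR} some lift fails to be a full cycle), obtaining $\delta=\frac{p-1}{p}$; you instead compute fibrewise, noting that $N$ of maximal order $p^{[G:V]}$ is forced to be the full coordinatewise product of $p$-cycles (one per block), that the twisted norm $\psi\colon N\to C_p$ is then a surjective homomorphism with equal fibres, and hence that exactly a $\frac1p$-fraction of each fibre over a full cycle fails to be a full cycle --- which even upgrades the paper's inequality to the equality $c_U=\frac{p-1}{p}\,c_V$. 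One step you leave implicit and should record: the identity $\rho(\bar\tau)=\Pr_{n\in N}\bigl[\psi(n)c\ne e\bigr]$ presupposes that $c=(\sigma^*)^m|_{\Delta_0}$ lies in the translation subgroup $C_p$; this does hold, because $\bar\tau$ has order $m=[G:V]$, so $(\sigma^*)^m\in N$ and its restriction to a block is a power of the $p$-cycle there --- and even without this observation one would still get $\rho(\bar\tau)\in\{0,\,1-\frac1p\}$, which suffices for $\delta=\frac{p-1}{p}$.
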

  \begin{proof}
  Upon noting that every image of a full cycle in $G_U$ must be a full cycle in $G_V$, it becomes obvious that one may choose $\delta$ as the maximal proportion of full cycles in any $N$-coset of a full cycle in $G_U$. The conclusions in cases a) and b) are then straightforward from assertions 2) and 1), respectively, of \cite[Proposition 3.2]{Koe24}. Indeed, as shown there, for case a) the constant $\delta$ can be chosen independently of $[V:U]$, and $\delta=\frac{1}{2}$ is an admissible choice, whereas for case b), $\delta = \frac{p-1}{p}$ works (note that here, one necessarily has $p=[V:U]$).  Finally, condition c) translates to saying that the embedding $G_U\le AGL_1(p)\wr G_V$ induced by the chain $G>V>U$ has ``large kernel" in the sense of \cite[\S 2.3]{KNR}. It is elementary to show that in this case, every full cycle of $G_V$ lifts to at least one element of $G_U$ which is {\it not} a full cycle (see, e.g., the proof of \cite[Proposition 5.1]{KNR} for a more general version of this), showing already $c_U<c_V$. On the other hand, if $C\subset G_V$ is a conjugacy class of full cycles, one has $|C|=\frac{|G_V|}{[G:V]}$, and similarly any conjugacy class of full cycles of $G_U$ in the preimage of $C$ is of length $\frac{|G_U|}{[G:U]} = \frac{|G_V|\cdot |N|}{[G:V]\cdot [V:U]}=|C|\cdot \frac{|N|}{p}$, meaning that as soon as the preimage of $C$ is not entirely composed of full cycles, at least a proportion of $\frac{1}{p}$ of this set are not full cycles. The assertion therefore holds with $\delta = \frac{p-1}{p}$ in this case.  
  \end{proof}

  We reword the most important (for us) consequences of Proposition \ref{prop:stabpr_gp} in a way suited specifically for deducing density zero results as in Section \ref{sec:proofs}.
  See also Theorems 1.2 and 3.3 in \cite{Koe24} for this kind of translation into an arithmetic dynamics setting.
    \begin{corollary}
  \label{cor:stabpr}
  Let $f\in K(X)$ be a rational function  and $a\in K$. Denote by $\kappa_n$ a field generated by a root of $f^{\circ n}(X)-a$ over $K$, with the roots picked such that $\kappa_n\subset \kappa_{n+1}$ for all $n$, and by $K_n$ the splitting field of $f^{\circ n}(X)-a$. Then the set of stable primes of $(f,a)$ is of density zero as soon as there are infinitely many $n\in \mathbb{N}$ for which one of the following is satisfied:
  \begin{itemize}
 \item[a)]
 The group $\mathrm{Gal}(K_n/K_{n-1})$ contains a non-identity element of order coprime to $\deg(f)$.
 \item[b)] The Galois group of the Galois closure of $\kappa_n/\kappa_{n-1}$ contains an element of a cycle type which is not contained in any iterated wreath product $AGL_1(p_r)\wr\dots \wr AGL_1(p_1)$ for primes $p_1,\dots, p_r$ with $\prod_{i=1}^r p_i=\deg(f)$.
  \end{itemize}
  \end{corollary}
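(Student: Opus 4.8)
The plan is to feed both conditions into the machinery summarised after Proposition~\ref{prop:stabpr_gp}: produce a maximal chain of subgroups of the arboreal Galois group along which one of the cases a), b), c) of that proposition occurs infinitely often, and then invoke the translation between the density of full cycles and the density of stable primes (cf.\ \cite[Theorems 1.2 and 3.3]{Koe24}). For the set-up I would first note that if $(f,a)$ is not stable over $K$, then some $f^{\circ N}(X)-a$ is reducible over $K$, hence reducible modulo all but finitely many primes, so the set of stable primes is finite; we may therefore assume $[\kappa_n:\kappa_{n-1}]=d:=\deg(f)$ for all $n$. Put $G:=G_{a,\infty}$, let $U_n\le G$ be the stabilizer of $\alpha_n$, so that $G=U_0>U_1>\dots$ with $[U_{n-1}:U_n]=d$, and refine this to a maximal chain $G=V_0>V_1>\dots$ passing through all the $U_n$, say $U_n=V_{k_n}$. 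The refinement-step indices lying between $V_{k_{n-1}}$ and $V_{k_n}$ multiply to $d$, so every $[V_{i-1}:V_i]$ divides $d$; hence, by the discussion following Proposition~\ref{prop:stabpr_gp}, it suffices to show that conditions a) and b) of the corollary each force one of a), b), c) of Proposition~\ref{prop:stabpr_gp} to hold for infinitely many pairs $(V_i,V_{i-1})$.

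For condition b): fix $n$ such that the Galois group of the Galois closure of $\kappa_n/\kappa_{n-1}$ contains an element of a cycle type occurring in no iterated wreath product $AGL_1(p_r)\wr\dots\wr AGL_1(p_1)$ with $p_1\cdots p_r=d$. Composing the refinement steps between $\mathrm{Stab}(\alpha_{n-1})$ and $\mathrm{Stab}(\alpha_n)$ embeds this Galois group into the iterated wreath product of the associated local groups $H_j$, each transitive of degree $q_j:=[V_{j-1}:V_j]\mid d$ (cf.\ Section~\ref{sec:prelim_wreath}). Were every $q_j$ prime and every $H_j\le AGL_1(q_j)$, the whole group would embed into an iterated wreath product $AGL_1(p_r)\wr\dots\wr AGL_1(p_1)$ with $p_1\cdots p_r=d$, so the forbidden cycle type would occur there — a contradiction. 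Hence some step has $H_j$ embeddable into no $AGL_1(q)$ (which is automatic whenever $q_j$ is composite), i.e.\ it is a case-a) step. Since condition b) supplies infinitely many such $n$ and the offending steps sit in pairwise disjoint blocks of the chain, case a) of Proposition~\ref{prop:stabpr_gp} occurs infinitely often.

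For condition a): fix $n$ such that $\Gal(K_n/K_{n-1})$ contains an element of prime order $\ell$ with $\ell\nmid d$. Writing $\widetilde{L}_i$ for the Galois closure over $K$ of the field fixed by $V_i$, the tower $K_{n-1}=\widetilde{L}_{k_{n-1}}\subseteq\dots\subseteq\widetilde{L}_{k_n}=K_n$ exhibits $\Gal(K_n/K_{n-1})$ with a subnormal series whose successive factors are $N_i:=\ker(G_{V_i}\to G_{V_{i-1}})$ for $k_{n-1}<i\le k_n$, so $\ell\mid\abs{N_i}$ for at least one such $i$. For that step put $q:=[V_{i-1}:V_i]$; then $q\mid d$, so $q\ne\ell$. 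If $q$ is composite, or $q$ is prime but $H_i\not\le AGL_1(q)$, this is a case-a) step; otherwise $H_i\le AGL_1(q)$, and $N_i$ — having an element of order $\ell\ne q$ — is not an elementary-abelian $q$-group, so this is a case-b) step. Infinitely many such $n$ again put infinitely many steps into cases a) or b), and the proof is complete.

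I expect the crux, and the point deserving the most care, to be the last dichotomy: making sure that the only configuration escaping cases a), b), c) of Proposition~\ref{prop:stabpr_gp} — a step with $H_i\le AGL_1(q)$ and $N_i$ an elementary-abelian $q$-group of less than maximal order $q^{[G:V_{i-1}]}$ — genuinely cannot contribute an element of order coprime to $\deg(f)$ to any $\Gal(K_n/K_{n-1})$. This works because $q\mid\deg(f)$, so such an $N_i$ is a $\deg(f)$-group; but it presupposes reading "elementary-abelian" in Proposition~\ref{prop:stabpr_gp}b), c) as "elementary-abelian $q$-group with $q=[V:U]$" (in line with the explicit phrasing of case c), and with \cite[Proposition 3.2]{Koe24}). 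One should also double-check the more delicate scenario in which a kernel is elementary-abelian of exponent $p'\ne q$ with $p'\mid q-1$: here a direct computation of the monodromy of a full cycle through that step, exploiting that $[G:V_{i-1}]$ is a $\deg(f)$-number and hence coprime to $p'$, still forces the proportion of full cycles down by a factor bounded away from $1$, so such steps are harmless as well.
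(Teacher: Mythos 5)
Your proposal is correct and takes essentially the same route as the paper's (very short) proof: iteratively apply Proposition \ref{prop:stabpr_gp} along a maximal refinement of the stabilizer chain, deducing part b) of the corollary from case a) of the proposition and part a) from case b) (together with a), exactly as the paper indicates. The one delicate configuration you flag --- a kernel $N$ that is elementary abelian of exponent coprime to $q=[V:U]$ --- is disposed of more simply than your sketch suggests: the $[G:V]$-th power of any full cycle of $G_U$ is a nontrivial element of $N$ of order $q$, so if $\exp(N)\neq q$ then $G_U$ contains no full cycles at all and the relevant density is trivially zero.
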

  \begin{proof} 
 Recall first that, for any $n\in \mathbb{N}$ and any prime $p$ of $K$, the mod-$p$ irreducibility of $f^{\circ n}(X)-a$ implies that $p$ is unramified in $K_n$ with Frobenius acting as a full cycle. Hence, by Chebotarev's density theorem, the assertion follows as soon as we show that the proportion of full cycles in $G_{a,n}:=\textrm{Gal}(f^{\circ n}(X)-a/K)$ converges to zero as $n\to \infty$.
  
  Refine the extensions $\kappa_{n}/\kappa_{n-1}$ to a sequence $\kappa_{n-1}=:F_{n,0}\subset F_{n,1}\subset\dots\subset F_{n,r}:=\kappa_n$ such that each $F_{n,i-1}$ is a maximal subfield of $F_{n,i}$, and for each $i\le r (=r(n))$, let $\Omega_{n,i}$ be the Galois closure of $F_{n,i}/K$. Note that we may assume $[K_n:K]=\deg(f)^n$ (or else, $f^{\circ n}(X)-a$ is reducible, whence there are no stable primes) and hence $[F_{n,i}:F_{n,0}]\mid \deg(f)$ for all indices $n$ and $i$. 

We will choose a group $G$, and subgroups $U$, $V$, $N$ with the goal of applying Proposition \ref{prop:stabpr_gp}.
 Since this choice depends on whether Condition a) or b) of Corollary \ref{cor:stabpr} holds, 
assume first that Condition a) holds for infinitely many $n$. For $B\ge 0$ arbitrary, pick the minimal integer $j\in \mathbb{N}$ such a) holds for at least $B$ different indices $n\le j$. 
  Since, by assumption, $\mathrm{Gal}(K_j/K_{j-1}) = \mathrm{Gal}(\Omega_{j,r}/\Omega_{j,0})$ contains a non-identity element of order coprime to $\deg(f)$, there exists a minimal index $i\in \{1,\dots,r\}$ such that 
  $\mathrm{Gal}(\Omega_{j,i}/\Omega_{j,0})$ contains a non-identity element of order coprime to $[F_{j,i}:F_{j,0}]$. 
  Set $G:=\textrm{Gal}(\Omega_{j,i}/K)$, $U:=\textrm{Gal}(\Omega_{j,i}/F_{j,i})$ and $V:=\textrm{Gal}(\Omega_{j,i}/F_{j,i-1})$, as well as $N:=\mathrm{Gal}(\Omega_{j,i}/\Omega_{j,i-1})$.
  Then $V$ is a minimal overgroup of $U$ by the maximality of $F_{j,i-1}$ in $F_{j,i}$ and the Galois correspondence, and $N$ is the kernel of the projection from $G_U$ to $G_V$ in the notation of Proposition \ref{prop:stabpr_gp}.
Moreover, $N$ cannot be elementary abelian (of exponent $[F_{j,i}:F_{j,i-1}]$), since otherwise every element of $\mathrm{Gal}(\Omega_{j,i}/\Omega_{j,0})$ would be of order dividing $[F_{j,i}:F_{j,i-1}]$ times an element order in $\mathrm{Gal}(\Omega_{j,i}/\Omega_{j,0})$, and hence not coprime to $[F_{j,i}:F_{j,i-1}]\cdot [F_{j,i-1}:F_{j,0}] = [F_{j,i}:F_{j,0}]$. We are therefore in Case a) or b) of Proposition \ref{prop:stabpr_gp} (with $G$, $U$, $V$ and $N$ as defined here).

Assume now on the other hand that Condition b) of Corollary \ref{cor:stabpr} holds for infinitely many $n$. For $B\ge 0$, pick the minimal integer $j\in \mathbb{N}$ such that b) holds for at least $B$ different indices $n\le j$. Since, by assumption, the Galois group of the Galois closure of $\kappa_n/\kappa_{n-1}$ does not embed into  any iterated wreath product $AGL_1(p_r)\wr\dots \wr AGL_1(p_1)$, there must be a minimal index $i\in \{1,\dots, r\}$ such that the Galois group $H$ of the Galois closure of $F_{j,i}/F_{j,i-1}$ does not embed (as a transitive subgroup) into any group $AGL_1(p)$.  Set again $G:=\textrm{Gal}(\Omega_{j,i}/K)$, $U:=\textrm{Gal}(\Omega_{j,i}/F_{j,i})$ and $V:=\textrm{Gal}(\Omega_{j,i}/F_{j,i-1})$, as well as $N:=\mathrm{Gal}(\Omega_{j,i}/\Omega_{j,i-1})$. We are therefore in Case a) of Proposition \ref{prop:stabpr_gp} (with $G$, $U$, $V$ and $N$ as defined here).

Altogether, we are always in case a) or b) of Proposition \ref{prop:stabpr_gp}.
Let $\delta_0$ be the maximum of the two constants $\delta$ from cases a) and b) of that proposition. 
We now claim that the proportion of full cycles in $G$ is upper bounded by $\delta_0^B$. Of course, the proportion of full cycles in $G_{a,n}$ is then also upper bounded by $\delta_0^B$ for all $n\ge j$, since full cycles of $G_{a,n}$ project to full cycles of $G$. The assertion thus follows from the claim, since obviously $\lim_{B\to \infty}\delta_0^B = 0$. We show the claim by induction over $B$. For $B=0$, there is nothing to show, so assume that the claim holds for $B-1$. In particular, the proportion of full cycles in the image $G_V$ of $G$ in the action on cosets of $V$ is upper bounded by $\delta_0^{B-1}$, so that, in the notation of Proposition \ref{prop:stabpr_gp}, we have $c_V\le \delta_0^{B-1}$. 
It thus follows from Proposition \ref{prop:stabpr_gp} that $c_U\le \delta_0\cdot c_V\le \delta_0^B$, which is the claim.
  \end{proof}

  \subsection{Ramification in specializations and prime divisors of dynamical sequences}
  The above gives a proof strategy to show that the proportion of stable primes of $(f,a)$ is zero in many concrete cases: it suffices to identify elements of suitable cycle structures in the kernel $\ker(\pi_n)$ for $\pi_n: \textrm{Gal}(f^{\circ n}(X)-a/K)\to \textrm{Gal}(f^{\circ n-1}(X)-a/K)$ the natural projection (and then use Corollary \ref{cor:stabpr}a)), or in the stabilizer of a root of $f^{\circ n-1}(X)-a$ (and then use Corollary \ref{cor:stabpr}b)), for infinitely many $n$. We do this by identifying inertia group generators of certain cycle types. To achieve this, we use a version of the ``specialization inertia theorem", linking ramification in the splitting field of $f^{\circ n}(X)-t$ over $K(t)$ with that in the splitting field of $f^{\circ n}(X)-a$ over $K$.    
  To state the theorem, recall the following notion: Let $R$ be a Dedekind domain with fraction field $K$, and $p$ a prime ideal of $R$. For $a\in R$ and $b$ an $R$-integral element of a finite extension of $K$,\footnote{More general definitions are possible (see, e.g., \cite{Leg}), but not necessary for our purposes and therefore omitted for sake of simplicity.} the {\it intersection multiplicity} of $a$ and $b$ at $p$ is defined as $m_p(a,b):=\nu_p(\mu_b(a))$, where $\mu_b\in R[X]$ denotes the minimal polynomial of $b$ and $\nu_p$ is the $p$-adic valuation.
  \begin{theorem}
  \label{thm:spec_in}
  Let $K$ be the fraction field of a Dedekind domain $R$ of characteristic $0$ such that all residue fields of $R$ modulo prime ideals are perfect. Let $F(t,X)\in K[t,X]$ be a monic separable polynomial, 
 $E/K(t)$ the splitting field of $F$ and $G:=\mathrm{Gal}(E/K(t))$. 
Let $t_1,\dots, t_r\in \overline{K}\cup \{\infty\}$ be the branch points of $E/K(t)$. 
  Let $\mathcal{S}_0:=\mathcal{S}_0(F,K)$ be the union of the following sets of prime ideals of $R$.
  \begin{itemize}
  \item[i)] Primes modulo which $F$ is not defined (i.e., dividing the denominator of some coefficient of $F$).
  \item[ii)] Primes dividing the discriminant $\Delta$ of $F(t,X)$ with respect to $X$. 
  \item[iii)] Primes $p$ such that mod-$p$ reduction of $\Delta$ decreases the number of roots of $\Delta$ (not counting multiplicity).\footnote{I.e., such that either two distinct factors of $\Delta$ coincide mod $p$ or some root of $\Delta$ is of negative $p$-adic valuation.}
  \item[iv)] Primes dividing $|G|$.
  \end{itemize}
  Call a prime $p$ of $R$ ``good" for $F$ if it does not belong to $\mathcal{S}_0$. Then for all good primes of $R$ and all $a\in R\setminus\{t_1,\dots, t_r\}$, the following hold:
  \begin{itemize}
  \item[a)] If $p$ ramifies in the residue field of $E/K(t)$ at $t\mapsto a$,\footnote{For values $a$ with $F(a,X)$ separable, this residue field is simply the splitting field of $F(a,X)$.} then there exists some branch point $t_i$ such that $m_p(a,t_i)>0$. 
  \item[b)] If conversely such $t_i$ exists, then the inertia group at $p$ in this residue field can be identified (up to conjugation in $G$) with a subgroup of the inertia group $I_{t_i}$ at (any place extending) $t\mapsto t_i$ in $E/K(t)$, and the ramification index at $p$ equals $\frac{|I_{t_i}|}{\textrm{gcd}(|I_{t_i}|, m_p(a,t_i))}$, where $m_p(a,t_i)$ denotes the intersection multiplicity of $a$ and $t_i$ at $p$.
  \end{itemize}  
  \end{theorem}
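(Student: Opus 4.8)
The plan is to prove this by passing to the completion at $p$ and exploiting good reduction of the branched cover of $\mathbb{P}^1_t$ attached to $E/K(t)$. Write $A$ for the completion of the localization $R_p$, with fraction field $K_p$ and (perfect) residue field $k$. Since ramification at $p$ in the residue field of $E/K(t)$ at $t\mapsto a$ is unaffected by the faithfully flat base change $R\to A$, it suffices to analyse the finite $A$-algebra obtained by specializing at $t\mapsto a$ the integral closure of $A[t]$ in $E$, together with its natural Galois action.

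First I would record what $p\notin\mathcal{S}_0$ buys us: by (i), $F$ has $A$-integral coefficients and is monic in $X$; by (ii), the reduction $\overline F\in k[t,X]$ is still separable, so $\overline\Delta\ne 0$; by (iii), the finite branch points $t_i$ (and their Galois conjugates) are $p$-integral, with pairwise distinct reductions, none equal to $\infty$; and by (iv), $p\nmid|G|$, so the cover is tame and each inertia subgroup $I_{t_i}$ is cyclic. Consequently the normalization $\mathcal{X}$ of $\mathbb{P}^1_A$ in $E$ is finite over $\mathbb{P}^1_A$, étale away from the horizontal divisor $\mathcal{D}$ cut out by the closures of the $t_i$ (plus the $\infty$-section if needed), tamely ramified along $\mathcal{D}$, and --- by Grothendieck's specialization theory for the tame fundamental group, or, more hands-on, by Abhyankar's lemma in the complete local rings along $\mathcal{D}$ --- has connected special fibre $\mathcal{X}_k\to\mathbb{P}^1_k$ a tame $G$-cover branched exactly along the reduced divisor $\overline{\mathcal{D}}$.

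Then I would pull $\mathcal{X}$ back along the section $s_a\colon\mathrm{Spec}\,A\to\mathbb{P}^1_A$, $t\mapsto a$; note $\overline a\ne\infty$ since $a\in R$. If $\overline a$ avoids $\overline{\mathcal D}$ --- equivalently $m_p(a,t_i)=\nu_p(\mu_{t_i}(a))=0$ for every $i$ --- then $s_a$ factors through the étale locus, $s_a^\ast\mathcal{X}\to\mathrm{Spec}\,A$ is finite étale, and $p$ is unramified; this is the contrapositive of a). If instead $\overline a=\overline{t_i}$ for the (by distinctness, unique up to conjugacy) relevant $i$, then after an unramified base change of $A$ I may assume $t_i\in A$ and that $A$ contains the needed roots of unity; put $u:=t-t_i$ and $m:=\nu_p(a-t_i)=m_p(a,t_i)>0$. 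Localizing $\mathcal{X}$ along $\{u=0\}$ and using tameness, the component of $E\otimes_{K(t)}K_p((u))$ with inertia $I_{t_i}$ of order $e:=|I_{t_i}|$ becomes, over $A[[u]]$, the totally tamely ramified extension $A[[u^{1/e}]]$ (Kummer theory over the regular base $A[[u]]$, legitimate because $p\nmid e$). Pulling back along $u\mapsto a-t_i$ and extracting an $e$-th root, a short valuation computation shows the resulting extension of $A$ is totally tamely ramified of degree $e/\gcd(e,m)$; hence the ramification index at $p$ equals $|I_{t_i}|/\gcd(|I_{t_i}|,m_p(a,t_i))$, and the inertia at $p$ is the unique subgroup of the cyclic group $I_{t_i}$ of that order, which proves b).

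I expect the main obstacle to be the globalization step in the second paragraph: upgrading "the branch locus reduces injectively and the cover is tame" to "the cover itself extends to a tame $G$-cover of $\mathbb{P}^1_A$ whose special fibre acquires no vertical components and no extra ramification." This is precisely where conditions (ii)--(iv) are used in an essential way; it can be obtained either from specialization of the tame fundamental group or from a local Newton-polygon analysis at each point of $\mathcal{D}$. Once this is in place, both the unramifiedness assertion in a) and the ramification-index formula in b) reduce to routine Kummer-theoretic computations over complete discrete valuation rings.
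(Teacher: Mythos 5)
The paper does not actually prove Theorem \ref{thm:spec_in}; it imports it from the literature (see \cite[Section 2.2.2]{Leg}, the summary in \cite[Section 3]{KLN}, and \cite[Theorem 2.2]{KN_locdim} for this version of $\mathcal{S}_0$), so there is no internal argument to measure you against. Your sketch reconstructs exactly the standard Beckmann--Conrad--Legrand proof used in those sources: complete at $p$, show the normalization of $\mathbb{P}^1_A$ in $E$ is a tame cover ramified only along the horizontal divisor through the branch points, and compute the fibre of the section $t\mapsto a$ by Kummer theory/Abhyankar's lemma in the completed local ring where $a$ meets $t_i$; your valuation computation giving ramification index $|I_{t_i}|/\gcd(|I_{t_i}|,m_p(a,t_i))$ and inertia inside the cyclic group $I_{t_i}$ is the correct one. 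Three points to tighten. First, the ``no vertical ramification'' step you flag as the main obstacle is more elementary than you suggest: condition ii) makes the content of $\Delta$ a $p$-unit, so $F$ has separable reduction at the Gauss valuation of $K(t)$ over $p$; hence each root field, and therefore the splitting field $E$, is unramified there. Second, your ``unramified base change of $A$'' putting $t_i\in A$ needs justification: it is condition iii) that forces $p$ to be unramified in the splitting field of $\Delta$ (a nontrivial inertia element would move some root and make two distinct roots of $\Delta$ coincide mod $p$); say this explicitly. Third, connectedness of the special fibre is neither needed nor true in general (a constant subextension of $E$ in which $p$ splits disconnects it), so drop that claim.
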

  \begin{proof}
  The conclusion is contained, e.g., in the Specialization Inertia Theorem stated in \ \cite[Section 2.2.2]{Leg}. 
  That paper also contains an explicit description of the finite set of exceptional primes, however slightly less convenient for us, since not connected to a concrete polynomial.
  To get this connection, one may additionally invoke \cite[Theorem 2.2]{KN_locdim}, which defines $\mathcal{S}_0$ as the set of primes fulfilling one of conditions ii), iii) or iv) above, but with the extra assumption that the coefficients of $F(t,X)$ are in $R$. Note that the latter theorem moves on from assertions about inertia groups to assertions about decomposition groups and residue degrees. Nevertheless, its set $\mathcal{S}_0$ can be taken for our purposes, for the following reason: As pointed out in the first sentence of the proof of  \cite[Theorem 2.2]{KN_locdim}, it is shown there that this set contains the set $\mathcal{S}_{exc}$ of (the intermediate result) \cite[Theorem 4.1]{KLN}, which in term is explicitly defined to contain the set of exceptional primes from the Specialization Inertia Theorem in \cite{Leg}, see \cite[Remark 2.3]{KN_locdim}. Therefore, the conclusion of the latter holds for all primes not in the exceptional set of \cite{KN_locdim}.
  It only remains to justify that we can allow $F(t,X)\in K[t,X]$ rather than $F(t,X)\in R[t,X]$, at the cost of additionally excluding the primes in i). This is however obvious, since for any given prime $p$ modulo which $F$ is defined, we can apply the theorem with the local ring $R_{(p)}$ at $p$ instead of $R$.
  \end{proof}
  
  The following is immediate from the explicit description of $\mathcal{S}_0$ given above.
  \begin{corollary}
  \label{cor:base_change}
  With the terminology of Theorem \ref{thm:spec_in}, let $L/K$ be a finite extension. Then a prime of $L$ is good for $F$ if it extends a prime of $K$ which is good for $F$.
  \end{corollary}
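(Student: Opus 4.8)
The plan is to prove the contrapositive, condition by condition: writing $\mathfrak{P}$ for a prime of $L$ and $\mathfrak{p}:=\mathfrak{P}\cap K$ for the prime of $K$ below it, I will check that if $\mathfrak{P}$ satisfies one of the defining conditions i)--iv) of $\mathcal{S}_0(F,L)$, then $\mathfrak{p}$ satisfies the corresponding condition for $\mathcal{S}_0(F,K)$. First I would record the two elementary facts that do all the work. (1) If $e=e(\mathfrak{P}/\mathfrak{p})\ge 1$ is the ramification index, then $\nu_{\mathfrak{P}}|_{K^\times}=e\cdot\nu_{\mathfrak{p}}$, so for every $c\in K$ one has $\nu_{\mathfrak{P}}(c)<0\iff\nu_{\mathfrak{p}}(c)<0$ and $\nu_{\mathfrak{P}}(c)>0\iff\nu_{\mathfrak{p}}(c)>0$; more generally, every place of $\overline{L}=\overline{K}$ extending $\nu_{\mathfrak{P}}$ is, up to equivalence, a place extending $\nu_{\mathfrak{p}}$. (2) The residue fields $R/\mathfrak{p}$ and $R_L/\mathfrak{P}$ have the same characteristic, where $R_L$ denotes the integral closure of $R$ in $L$; since $L/K$ is separable (we are in characteristic $0$), $R_L$ is again a Dedekind domain with fraction field $L$, and its residue fields are finite extensions of those of $R$, hence perfect, so Theorem~\ref{thm:spec_in} indeed applies over $L$ and ``good for $F$'' over $L$ makes sense. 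Note also that $F$, its $X$-discriminant $\Delta\in R[t]$, and the roots of $\Delta$ in $\overline{K}=\overline{L}$ are literally unchanged in passing from $K$ to $L$.

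Given this, conditions i), ii), iii) are immediate. For i): the coefficients of $F$ lie in $K$, so by (1) a denominator divisible by $\mathfrak{P}$ is already divisible by $\mathfrak{p}$. For ii): the coefficients of $\Delta$ lie in $K$, so ``$\mathfrak{P}\mid\Delta$'' forces ``$\mathfrak{p}\mid\Delta$'' by the same fact. For iii): the number of roots of the reduction of $\Delta$, and whether roots of $\Delta$ merge or acquire negative valuation, depend only on the chosen place of $\overline{K}$ above the prime in question; since each place above $\mathfrak{P}$ restricts to a place above $\mathfrak{p}$, a degeneration occurring modulo $\mathfrak{P}$ already occurs modulo $\mathfrak{p}$. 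Condition iv) is the only mildly nontrivial point: here I would use that the splitting field of $F$ over $L(t)$ is the compositum $E\cdot L(t)$; that $E/K(t)$ being Galois makes $E\cdot L(t)/L(t)$ Galois; and that restriction to $E$ yields an injection $\mathrm{Gal}(E\cdot L(t)/L(t))\hookrightarrow G=\mathrm{Gal}(E/K(t))$. Hence the group order relevant over $L$ divides $|G|$, and then (2) shows that if the residue characteristic of $\mathfrak{p}$ does not divide $|G|$, it divides neither $|G|$ nor the group order over $L$, so $\mathfrak{P}\notin$ iv).

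I do not anticipate any real obstacle; the content of the corollary is precisely that $\mathcal{S}_0$ was set up robustly, being defined only through valuations of fixed elements of $K$, through a fixed polynomial and its roots, and through a group order that can only shrink under base change. The one place demanding a little care is condition iv): one must work with the genuine base-changed group $\mathrm{Gal}(E\cdot L(t)/L(t))$ rather than with $G$, and one must keep in mind that $L/K$ need not be Galois, so that $L(t)/K(t)$ need not be either; this is harmless, since for the restriction map above it is $E/K(t)$, not $L(t)/K(t)$, that needs to be Galois.
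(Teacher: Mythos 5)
Your proposal is correct and follows essentially the same route as the paper, which simply declares the corollary ``immediate from the explicit description of $\mathcal{S}_0$'': each of conditions i)--iv) is phrased in terms of valuations of fixed elements of $K$, the fixed polynomial $\Delta$ and its roots in $\overline{K}=\overline{L}$, and a group order that can only shrink under base change, so a bad prime of $L$ forces a bad prime of $K$ below it. Your write-up merely makes explicit the routine checks (valuation restriction, perfectness of residue fields of $R_L$, and $\mathrm{Gal}(E\cdot L(t)/L(t))\hookrightarrow G$) that the paper leaves to the reader.
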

  
  Note that for the case of a rational function $f=\frac{f_1}{f_2}$ the branch points of the splitting field of $f_1(X)-tf_2(X)$ over $K(t)$ are simply the critical values of $f$, and the cycle type of the inertia group generator at a given branch point $t_i$ is the multiset of multiplicities of preimages of $t_i$ under $f$. Similarly, the branch points of the splitting field of (the numerator of) $f^{\circ n}(X)-t$ are all the iterates $f^{\circ k}(t_i)$ with $t_i$ a critical value of $f$ and $0\le k\le n-1$. An important point to remember when applying Theorem \ref{thm:spec_in} to arithmetic dynamics problems is that, for a postcritically infinite $f$, the set of these $f^{\circ k}(t_i)$ is of unbounded cardinality (as $n\to \infty$), while its elements are of course all of bounded degree over $K$, meaning that {\it every} prime will eventually fall into case iii) above, for $n\to \infty$. Hence application to the problems considered in this paper requires some care. 
  
  The following result on prime divisors in dynamical sequences will be needed,  in view of Theorem \ref{thm:spec_in}, to ensure divisibility of ramification indices by certain primes. It is a slight modification of \cite[Lemma 12]{Betal}. In fact, a) is explicitly contained there, but we will also require b), which from a purely logical point of view is slightly stronger.
  \begin{lemma}
  \label{lem:betal}
  Let $f\in K(X)$ be a rational function over a number field $K$ such that $0$ is not postcritical for $f$, and let $t_i\in K$ be a wandering point of $f$. Then, given any integer $e\ge 2$ and any finite set $\mathcal{S}$ of primes of $O_K$, the following hold:
  \begin{itemize}
  \item[a)] There is an infinite set of integers $n\in \mathbb{N}$ and for each such $n$ at least one prime $p$ of $O_K$ not contained in $\mathcal{S}$, such that the $p$-adic valuation of $f^{\circ n}(t_i)$ is positive and not divisible by $e$.
  \item[b)] More precisely, there is an infinite set $\mathcal{M}\subseteq \mathbb N$ such that for every $n\in \mathcal{M}$ we have $$\mathrm{gcd}(\{e\}\cup \{\nu_p(f^{\circ n}(t_i)): p\notin \mathcal{S} \text{ and } \nu_p(f^{\circ n}(t_i)) > 0\}) = 1.$$  \end{itemize}\end{lemma}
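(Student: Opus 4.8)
The plan is to establish b); part a) then follows at once, since a gcd equal to $1$ forces both that $\{p\notin\mathcal S:\nu_p(f^{\circ n}(t_i))>0\}$ is nonempty and that one of its members contributes a valuation not divisible by $e$. Write $a_n:=f^{\circ n}(t_i)$; as $t_i$ is wandering its orbit is infinite, so $a_n\in K^\times$ for all but finitely many $n$, and it suffices to prove that for every prime $q\mid e$ the set $B_q$ of those $n$ for which the prime-to-$\mathcal S$ part of the numerator ideal of $(a_n)$ is a perfect $q$-th power of an ideal is finite: then all large $n$ avoid every $B_q$, and for such $n$ the gcd in b) is $1$ (if $n\notin B_q$ for all prime $q\mid e$, the prime-to-$\mathcal S$ numerator is nontrivial and has, for each such $q$, a prime factor with exponent not divisible by $q$). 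Two reductions set this up. First, enlarging $\mathcal S$ only weakens the conclusion, so we may freely enlarge it; and ``$0$ not postcritical'' means exactly that $0$ is a critical value of no iterate of $f$, i.e.\ that the numerator $g_m$ of $f^{\circ m}=g_m/h_m$ (in lowest terms) is a separable polynomial for every $m$. Since $\deg g_m\ge(\deg f)^m-1\to\infty$ when $\deg f\ge 2$, we fix once and for all an $m$ for which $g_m$ is separable and has at least six distinct roots; only afterwards do we enlarge $\mathcal S$, to include the primes of bad reduction of $f$, the primes dividing $\operatorname{disc}(g_m)$, $\operatorname{Res}(g_m,h_m)$ or the leading coefficient of $g_mh_m$, the primes at which $g_m$ or $h_m$ has a non-integral coefficient, the primes ramifying in the splitting field $L$ of $g_m$, and enough further primes that the relevant ring of $\mathcal S$-integers (also after the base change below) becomes a principal ideal domain — a finite enlargement precisely because $m$ is already fixed. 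Second, for $p\notin\mathcal S$ the valuation of $a_n\in K$ equals that of $a_n$ at any prime of $L$ above $p$; so it suffices to prove the statement with $K$ replaced by $L$, and we may assume $g_m=c_m\prod_{i}(X-\theta_i)$ splits over $K$, with six of the (distinct) roots lying in $O_{K,\mathcal S}$ and labelled $\theta_1,\dots,\theta_6$.

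Now I use the self-similarity $a_n=f^{\circ m}(a_{n-m})=g_m(a_{n-m})/h_m(a_{n-m})$. Fix a prime $q\mid e$, and let $n\in B_q$ with $\ell:=n-m$ so large that $a_n\in K^\times$ and $a_\ell\notin\{\theta_1,\dots,\theta_6,\infty\}$. For $2\le j\le 6$ put $\rho_j:=\dfrac{a_\ell-\theta_1}{a_\ell-\theta_j}\in K^\times$; I claim $q\mid\nu_p(\rho_j)$ for every prime $p\notin\mathcal S$. If $a_\ell$ has a pole at $p$, then $a_\ell-\theta_1$ and $a_\ell-\theta_j$ have the same (negative) order there and $\nu_p(\rho_j)=0$. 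Otherwise $a_\ell$ is $p$-integral; by the choice of $\mathcal S$ all roots of $g_m$ have pairwise distinct reductions modulo $p$ and $g_m,h_m$ are coprime modulo $p$, so at most one of $\nu_p(a_\ell-\theta_1),\nu_p(a_\ell-\theta_j)$ is positive, and whichever is positive equals $\nu_p(g_m(a_\ell))=\nu_p(a_n)$ (as then $p\nmid h_m(a_\ell)$), hence is divisible by $q$ because $n\in B_q$; so $q\mid\nu_p(\rho_j)$ in every case. As the ring of $\mathcal S$-integers is a PID and its group of $\mathcal S$-units modulo $q$-th powers is finite, this forces $\rho_j=v_jw_j^q$ with $w_j\in K^\times$ and $v_j$ in a fixed finite set. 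Multiplying these relations over $j=2,\dots,6$ exhibits an affine point $(a_\ell,\,w_2\cdots w_6)$ on the superelliptic curve
\[
C_c:\qquad W^q\cdot\prod_{j=2}^{6}(z-\theta_j)=c\,(z-\theta_1)^5,
\]
for one of only finitely many constants $c$.

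Each such $C_c$ is geometrically irreducible — the function $(z-\theta_1)^5/\prod_{j=2}^6(z-\theta_j)$ has a simple pole, so is no $q$-th power in $\overline{K}(z)$ — and a Riemann--Hurwitz computation shows it has genus at least $2$: it is branched, always with ramification index $q$, over at least five of the six distinct points $\theta_1,\dots,\theta_6$, and is unramified over $\infty$. Hence $C_c(K)$ is finite by Faltings' theorem. But the points $(a_\ell,\,w_2\cdots w_6)$ produced above are pairwise distinct, since their first coordinates $a_\ell$ are (the orbit of $t_i$ being infinite); so an infinite $B_q$ would place infinitely many distinct points in the finite set $\bigcup_c C_c(K)$, a contradiction. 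Therefore $B_q$ is finite for every prime $q\mid e$, and the lemma follows when $\deg f\ge 2$. The remaining case $\deg f=1$ does not arise in the applications (there $f$ has critical points) and is elementary anyway: a Möbius map of infinite order carries $t_i$, in a suitable coordinate, along an arithmetic or a geometric progression, for which a congruence argument yields, for infinitely many $n$, a prime $p\notin\mathcal S$ with $\nu_p(a_n)=1$.

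The step I expect to be most delicate is the one in the second paragraph: pushing the divisibility-by-$q$ information through the identity $a_n=f^{\circ m}(a_{n-m})$ and through the passage to $L$ while keeping the exceptional set of primes finite — which is exactly why the index $m$ must be fixed, depending only on $f$, before $\mathcal S$ is enlarged (compare the excerpt's remark that otherwise ``every prime will eventually fall into case iii)'' of Theorem~\ref{thm:spec_in}). The other inputs — finiteness of $\mathcal S$-units modulo $q$-th powers, the genus bound, and Faltings' theorem — are standard.
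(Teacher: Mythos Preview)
Your proof is correct and follows essentially the same route as the paper's (which is itself a sketch of \cite[Lemma~12]{Betal}): pass to an iterate whose numerator has sufficiently many distinct roots, then invoke Faltings to rule out, for all large $n$, the possibility that the prime-to-$\mathcal S$ numerator of $a_n$ is a perfect $q$-th power. Where the paper quotes Darmon--Granville as a black box applied to $sF(X,Y)=Z^d$, you have unpacked that theorem into an explicit descent---via cross-ratios and the finiteness of $\mathcal S$-units modulo $q$-th powers---to finitely many superelliptic curves of genus~$\ge 2$, which is exactly the mechanism behind Darmon--Granville.
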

  \begin{proof}
 Part a) 
 is precisely \cite[Lemma 12]{Betal}.
 Part b) 
 is also immediate from the proof of Lemma 12 in \cite{Betal}, whose main ideas we thus sketch here: Write $f(X/Y)=\frac{F(X,Y)}{G(X,Y)}$ as a quotient of homogeneous polynomials, and 
 in order to avoid primes simultaneously dividing the numerator and denominator (in this representation) of some value $f(x/y)$, enlarge $\mathcal{S}$ to contain the finitely many prime divisors of the resultant $\textrm{Res}(F,G)$. Next, pick any prime divisor $d$ of $e$. 
 As noted in the proof of \cite[Lemma 12]{Betal}, one may invoke a result by Darmon and Granville (\cite[Theorem 1]{DG}), possibly upon replacing $f$ by a suitable iterate $f^{\circ k}$, ensuring that for any fixed $s\in K\setminus\{0\}$, the equation $sF(X,Y)=Z^d$ has only finitely many solutions $(x,y,z)$ in $K$ with the values $\frac{x}{y}$ distinct and the ideal $(x,y)\subseteq O_K$ dividing some fixed ideal. 
 Letting $(X,Y)$ run over the infinitely many values $(\alpha_n,\beta_n)\in O_K^2$, $n\in \mathbb{N}$ such that $f^{\circ n}(t_i) = \frac{\alpha_n}{\beta_n}$, and $s$ over a finite set of representatives of $\mathcal{S}$-units modulo $d$-th powers, it follows that, for all sufficiently large $n$, the $\mathrm{gcd}$ of valuations $\nu_p(F(\alpha_{n-1}, \beta_{n-1}))$, $p\notin \mathcal{S}$ must be coprime to $d$.  Combining this for all prime divisors $d$ of $e$, it follows that there exists $n\in \mathbb{N}$ such that $\mathrm{gcd}(\{e\}\cup \{\nu_p(f^{\circ n}(t_i)): p\notin \mathcal{S} \text{ and } \nu_p(f^{\circ n}(t_i)) > 0\}) = 1$.
 \end{proof}

  \section{Proofs of the main results}
  \label{sec:proofs}
 The proof of Theorem \ref{thm:main} will be achieved via the following technical result which sharpens \cite[Theorem 5]{Betal}.
  \begin{theorem}
 \label{thm:tech_strong}
 Let $f\in K(X)$, let $a\in K$ be neither postcritical nor a fixed point of $f$, and let $q$ be a prime number not dividing $\deg(f)$. Let  $G_n:=\mathrm{Gal}(f^{\circ n}(X)-a/K)$ and $\pi_n:G_n\to G_{n-1}$ the natural projection.  Assume that there exists at least one wandering critical point $t_i$ of $f$ whose multiplicity under $f$ is divisible by $q$. Then for infinitely many $n\in \mathbb{N}$, the kernel $\ker(\pi_n)\le G_n$ contains an element of order divisible by $q$.
 \end{theorem}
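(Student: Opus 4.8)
My plan is to produce, for infinitely many $n$, a prime whose inertia in the splitting field of $f^{\circ n}(X)-a$ ``sees'' only the single critical point $\alpha$ whose multiplicity $e$ under $f$ is divisible by $q$, and then invoke the specialization inertia theorem (Theorem~\ref{thm:spec_in}) to turn this into an inertia generator of order divisible by $q$ lying in $\ker(\pi_n)$; Lemma~\ref{lem:betal} supplies the arithmetic ``transversality'' keeping the relevant intersection multiplicity prime to $q$. First, since being critical, wandering, postcritical or a fixed point is insensitive to the base field, since good primes remain good under finite extension (Corollary~\ref{cor:base_change}), and since for a finite extension $L/K$ restriction of automorphisms embeds the analogue of $\ker(\pi_n)$ over $L$ order-preservingly into $\ker(\pi_n)$, I may enlarge $K$ and assume all critical points of $f$ — hence all iterates of all critical values — lie in $K$. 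Fix the wandering critical point $\alpha\in K$ with $q\mid e$, and set $\delta:=f(\alpha)\in K$, $\gamma_j:=f^{\circ j}(\alpha)\in K$, so that $\gamma_1=\delta$, $\gamma_{j+1}=f(\gamma_j)$, and the $\gamma_j$ are pairwise distinct.

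\emph{Reduction to a single application of $f$.} Write $K_n$ for the splitting field of $f^{\circ n}(X)-a$ over $K$, so $\ker(\pi_n)=\mathrm{Gal}(K_n/K_{n-1})$. For any root $\beta$ of $f^{\circ(n-1)}(X)-a$, the roots of $f(X)-\beta$ are among the roots of $f^{\circ n}(X)-a$, so the splitting field $L_\beta$ of $f(X)-\beta$ over $K_{n-1}$ lies in $K_n$, and restriction surjects $\mathrm{Gal}(K_n/K_{n-1})$ onto $\mathrm{Gal}(L_\beta/K_{n-1})$, carrying inertia subgroups onto inertia subgroups. Since the order of an element divides that of any preimage, it suffices to find, for infinitely many $n$, a root $\beta$ of $f^{\circ(n-1)}(X)-a$ and a prime $\mathfrak q$ of $K_{n-1}$ above a prime $p\notin\mathcal S_0(f(X)-t,K)$ of $K$, with $\nu_{\mathfrak q}(\beta-\delta)>0$ and $v_q\bigl(\nu_{\mathfrak q}(\beta-\delta)\bigr)<v_q(e)$. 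Indeed, for such data $\mathfrak q$ is good for the \emph{fixed} degree-$\deg(f)$ cover $f(X)-t$ over $K_{n-1}(t)$ (Corollary~\ref{cor:base_change}); $\beta$ is not a branch point (two distinct critical values cannot be congruent modulo a good $\mathfrak q$, so $\beta\equiv\delta$ together with $\beta$ being a critical value would force $\beta=\delta$, excluded since $\gamma_n\neq a$); and Theorem~\ref{thm:spec_in}(b) with $t\mapsto\beta$ identifies the inertia at $\mathfrak q$ in $L_\beta$ with $\langle\sigma^{m}\rangle$, where $m:=\nu_{\mathfrak q}(\beta-\delta)$ and the inertia generator $\sigma$ over the branch point $\delta$ has, among its cycle lengths, the multiplicity $e$ of the preimage $\alpha$ of $\delta$. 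As $v_q(m)<v_q(e)$, the permutation $\sigma^{m}$ still has a cycle of length $e/\gcd(e,m)$ divisible by $q$, so $\mathrm{Gal}(L_\beta/K_{n-1})$ — hence $\ker(\pi_n)$ — contains an element of order divisible by $q$.

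\emph{Choosing the prime and the root.} Let $F_a:=(X-a)\circ f\circ(X+a)\in K(X)$, so that $F_a^{\circ n}(\alpha-a)=f^{\circ n}(\alpha)-a=\gamma_n-a$; here $\alpha-a$ is a wandering point of $F_a$, and $0$ is not postcritical for $F_a$ precisely because $a$ is not postcritical for $f$. Applying Lemma~\ref{lem:betal}(b) to $F_a$, the point $\alpha-a$, the prime $q$, and a fixed finite set $\mathcal S\supseteq\mathcal S_0(f(X)-t,K)$ (together with a few further primes of $K$ reserved below) yields an infinite set of $n$ and, for each, a prime $p=p_n\notin\mathcal S$ with $\nu_p(\gamma_n-a)>0$ and $q\nmid\nu_p(\gamma_n-a)$. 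Discarding the at most one $n$ with $\gamma_n=a$, we get $a\equiv\gamma_n=f^{\circ(n-1)}(\delta)\pmod p$, so $f^{\circ(n-1)}(X)-a$ has a root $\beta$ with $\beta\equiv\delta$ modulo some prime $\mathfrak q\mid p$ of $K_{n-1}$. It only remains to pin down $v_q\bigl(\nu_{\mathfrak q}(\beta-\delta)\bigr)$.

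\emph{The main obstacle.} Because $\beta\equiv\delta$ modulo $\mathfrak q$, the orbit $\beta,f(\beta),\dots,f^{\circ(n-1)}(\beta)=a$ shadows $\delta,\gamma_2,\dots,\gamma_n$ modulo $\mathfrak q$, and iterating $f^{\circ j}(\beta)-\gamma_{j+1}=f\bigl(f^{\circ(j-1)}(\beta)\bigr)-f(\gamma_j)$ while factoring each $f(X)-f(\gamma_j)$ at $\gamma_j$ gives $\nu_{\mathfrak q}(\gamma_n-a)=E\cdot\nu_{\mathfrak q}(\beta-\delta)+(\text{error})$, with $E:=\prod_{j=1}^{n-1}e_f(\gamma_j)=e_{f^{\circ(n-1)}}(\delta)$ dividing $\deg(f)^{n-1}$ (hence prime to $q$) and ``error'' a sum of terms $\bigl(\prod_{k>j}e_f(\gamma_k)\bigr)\nu_{\mathfrak q}\bigl(h_j(\gamma_j)\bigr)$, where $e_f(\gamma_j)$ is the multiplicity of $\gamma_j$ under $f$ and $h_j$ is the cofactor of $(X-\gamma_j)^{e_f(\gamma_j)}$ in $f(X)-f(\gamma_j)$ (so $h_j(\gamma_j)=f'(\gamma_j)$ for non-critical $\gamma_j$). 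Only finitely many $\gamma_j$ (independently of $n$) are critical points of $f$, and their contributions to the error involve fixed nonzero numbers, whose prime divisors I put into $\mathcal S$; the genuine difficulty is to ensure, for the infinitely many levels delivered by Lemma~\ref{lem:betal}, that neither the remaining error terms $\nu_{\mathfrak q}(f'(\gamma_j))$ from non-critical $\gamma_j$ that degenerate to a critical point only modulo $\mathfrak q$, nor the ramification index $e(\mathfrak q/p)$ relating $\nu_{\mathfrak q}(\gamma_n-a)$ to $\nu_p(\gamma_n-a)$, contributes a factor of $q$. This is the crux: the prime $p$ is non-constructive, so one must argue that it is automatically ``generic enough'' for the finitely many critical points occurring in the orbit of $\alpha$ — and handle, via a Newton polygon at $\delta$, the case where $\delta$ is a multiple root of $f^{\circ(n-1)}(X)-\gamma_n$. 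Once this is done, $v_q\bigl(\nu_{\mathfrak q}(\beta-\delta)\bigr)=v_q\bigl(\nu_p(\gamma_n-a)\bigr)=0<v_q(e)$, and the reduction step concludes. This decoupling of the single $q$-relevant critical point from the unboundedly growing branch locus of $f^{\circ n}(X)-t$ is exactly the point at which the result sharpens \cite[Theorem~5]{Betal}.
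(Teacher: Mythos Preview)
Your overall architecture is exactly that of the paper: reduce to a single application of $f$ over $K_{n-1}$, feed Lemma~\ref{lem:betal} into Theorem~\ref{thm:spec_in}, and read off an inertia element of order divisible by $q$ in $\mathrm{Gal}(L_\beta/K_{n-1})\hookrightarrow\ker(\pi_n)$. The reductions (base change to make critical points rational; Corollary~\ref{cor:base_change} for goodness) are also the paper's. The gap is precisely where you put your finger: you name ``the main obstacle'' and then write ``Once this is done'' without doing it. As written, the proposal is incomplete.

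Two concrete problems with the route you sketch for closing the gap. First, the claim that $E=\prod_{j=1}^{n-1}e_f(\gamma_j)$ \emph{divides} $\deg(f)^{n-1}$ is false in general: multiplicities under $f$ are bounded by $\deg(f)$ but need not divide it, so nothing prevents some $\gamma_j$ in the forward orbit of $\alpha$ from being itself a critical point of multiplicity divisible by $q$, whence $q\mid E$ and your telescoping identity cannot recover $v_q\bigl(\nu_{\mathfrak q}(\beta-\delta)\bigr)=0$ from $v_q\bigl(\nu_{\mathfrak q}(\gamma_n-a)\bigr)=0$ even if the error terms vanished. Second, even granting $\gcd(E,q)=1$, controlling the unbounded family of error terms $\nu_{\mathfrak q}(h_j(\gamma_j))$ together with $e(\mathfrak q/p)$ simultaneously is exactly the kind of ``degeneration along the growing branch locus'' that makes a direct attack hard; the Newton polygon suggestion does not address the ramification index $e(\mathfrak q/p)$ at all.

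The paper sidesteps all of this with two strokes you are missing. First, rather than trying to force $e(\mathfrak q/p)$ coprime to $q$, it observes that if the ramification index of $p$ in $K_{n-1}$ is already divisible by $q$, then the \emph{first} level $k<n$ at which this happens hands you an element of order divisible by $q$ in $\ker(\pi_k)$ for free (raise the inertia generator to a suitable power); so one may simply assume $e(\mathfrak p/p)$ is coprime to $q$. Second, instead of tracking one chosen root $\beta$ through an $n$-step recursion, it uses the global factorization $F_{n-1}(t_i,1)=\prod_j(t_i-\delta_j)$ to write
\[
\sum_j \nu_{\mathfrak p}(t_i-\delta_j)=\nu_{\mathfrak p}\bigl(f^{\circ(n-1)}(t_i)\bigr)=e(\mathfrak p/p)\cdot\nu_p\bigl(f^{\circ(n-1)}(t_i)\bigr),
\]
which is now coprime to $q$; hence \emph{some} $\delta_j$ has $\nu_{\mathfrak p}(t_i-\delta_j)$ positive and coprime to $q$. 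No recursion, no error terms, no $E$, no Newton polygon. This pigeonhole over the full set of roots, combined with the ``else done at an earlier level'' reduction, is the missing idea.
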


 \begin{proof}[Proof of Theorem \ref{thm:main} assuming Theorem \ref{thm:tech_strong}]
 This is now an immediate consequence of Corollary \ref{cor:stabpr}. Note only that the technical assumptions on $a$ in Theorem \ref{thm:tech_strong} are harmless, since postcritical values (resp., fixed points) of $f$ render some iterate $f^{\circ n}(X)-a$ inseparable (resp., render $f(X)-a$ reducible), so that $(f,a)$ would not even be stable in such a case.
 \end{proof}
  
In order to be able to derive Corollary \ref{thm:primedeg}, we first insert a proof of Theorem \ref{thm:unicrit}.

\begin{proof}[Proof of Theorem \ref{thm:unicrit}]
Begin by noting that up to conjugacy over $K$, i.e., replacing $(f,a)$ by $(\mu\circ f\circ \mu^{-1}, \mu(a))$  we may (and will) assume $f$ to be of the form $f(X)=uX^d+v$ for $u,v\in K$, $u\ne 0$.
First, assume that $f$ is postcritically infinite. Let $p$ be a prime of $K$ such that $(f,a)$ is stable mod $p$. It follows first (see, e.g., \cite[Remark 14]{DDetal}) that $p$ is of norm $N(p)\equiv 1 \pmod q$ for every prime number $q$ dividing $d$, and $N(p)\equiv 1 \pmod 4$ in case $4|d$; indeed, otherwise no polynomial of the form $X^q-\alpha$ (resp., $X^4-\alpha$) would be irreducible modulo $p$, whence $f(X)-a$ would be reducible modulo $p$.
Now, fix any prime divisor $q$ of $d$. From \cite[Theorem 13]{DDetal}, it is then necessary that $\frac{f^{\circ n}(v)-a}{u}$ is not a $q$-th power modulo $p$ for any $n\ge 1$. In other words (since the completion of $K$ at $p$ contains the $q$-th roots of unity due to $N(p)\equiv 1 \pmod q$), $p$ is inert in the extension $K_n:=K\left(\sqrt[q]{\frac{f^{\circ n}(v)-a}{u}}\right)$ of $K$, for all $n\ge 1$. But due to Lemma \ref{lem:betal} (applied with the divisor $q$ of $d=\deg(f)$), the set of primes of $K$ ramifying in at least one $K_n$ is infinite, whence infinitely many of the fields $K_n$ are linearly disjoint over $K(\zeta_q)$. This implies that the density of primes 
remaining inert in all of them is zero, ending the proof of 1).

Assume now that $f$ is postcritically finite. and let $n\in \mathbb{N}$ be such that the set $\{f^{\circ j}(0)\mid 1\le j\le n\}$ is the full forward orbit of the critical point $0$ of $f$. It follows again from \cite[Theorem 13]{DDetal} that stability of $(f,a)$ modulo $p$ is equivalent to irreducibility of $f^{\circ j}(X)-a \bmod p$ for all $j\le n+1$. 
Hence for the set of stable primes of $(f,a)$ to be of positive density, it suffices that $f^{\circ n}(X)-a$ is irreducible modulo a positive density set of primes $p$ of $K$. For all primes $p$ modulo which the latter polynomial is separable, its mod-$p$ irreducibility is equivalent 
to the Frobenius at $p$ acting as a generator of a cyclic transitive subgroup. Thus, by Chebotarev's density theorem, the set of stable primes must be of positive density, as long as $\textrm{Gal}(f^{\circ n}(X)-a/K)$ contains a cyclic transitive subgroup. Since the monodromy group $\textrm{Gal}(f^{\circ n}(X)-t/K(t))$ contains such a subgroup (namely, an inertia group at $t\mapsto \infty$), Hilbert's irreducibility theorem 
yields a positive density set of stable primes $p$ for all $a$ outside of a thin set.
\end{proof}

\begin{remark}
Theorem \ref{thm:unicrit} and its proof also allow to determine the precise (positive) proportion of stable primes for certain unicritical PCF polynomials. As an example, take $f=1-X^d\in \mathbb{Q}[X]$, which has critical orbit $\{0,1\}$. To simplify matters, we will assume $d$ to be an odd prime. By \cite[Theorem 13]{DDetal} as already used in the above proof, mod-$p$ stability of $(f,a)$ is equivalent to $f^{\circ j}(X)-a$ being irreducible modulo $p$ for all $j\le 3$, which in this case amounts to none of $1-a$, $a$ and $a-1$ being a $d$-th power modulo $p$; since $d$ is odd, the last of the three conditions is already implied by the first, so that mod-$p$ stability is in fact equivalent to $f(f(X))-a$ being irreducible modulo $p$. In particular,
the proportion of stable primes of $(f,a)$ equals the proportion of $d^2$-cycles in $G_{a,2}:=\textrm{Gal}(f(f(X))-a/\mathbb{Q})$.
Since $d$ is a prime, all $d$-cycles in $G_{a,1}:=\textrm{Gal}(f(X)-a/\mathbb{Q})$ (and a fortiori, all $d^2$-cycles in $G_{a,2}$) lie in the index-$(d-1)$ subgroup fixing $d$-th roots of unity. Since it is easy to see (via considering the ramification type of the map $X\mapsto f(f(X))$) that $\textrm{Gal}(f(f(X))-t/\mathbb{Q}(\zeta_d)(t))\cong C_d\wr C_d$, in which the proportion of $d^2$-cycles equals $(\frac{d-1}{d})^2$, it follows that for ``most" $a\in \mathbb{Q}$ (namely, all outside of a thin set), the proportion of stable primes of $(f,a)$ equals $\frac{1}{d-1}\cdot (\frac{d-1}{d})^2=\frac{d-1}{d^2}$. For certain values $a$, a higher proportion can be reached, e.g., for $d=3$, a computation with Magma (investigating the proportion of $9$-cycles in maximal subgroups of $C_3\wr C_3$ and then parameterizing the corresponding fixed fields) yielded that for $a\in \frac{1}{1-X^3}(\mathbb{Q})$ (exempting those $a$ rendering $f(f(X))-a$ reducible), the Galois group $G_{a,2}$ shrinks to a subgroup with $9$-cycle proportion $\frac{1}{3}$, whence the density of stable primes increases to $\frac{1}{3}$.
\end{remark}

 We continue with the proof of the corollaries stated in Section \ref{sec:intro}.
 \begin{proof}[Proof of Corollary \ref{thm:primedeg}]
 For a polynomial of prime degree $p$, infinity is automatically a critical point of multiplicity $p$ under $f$. If there were a further critical point $a$ of multiplicity $p$, this would have to be the only finite critical point, since the sum of ``multiplicities minus $1$" of all finite critical points needs to sum up to $\deg(f')=p-1$. 
In other words, $f$ is then unicritical and postcritically infinite, whence the assertion follows from Case 1) of Theorem \ref{thm:unicrit}.

 In all other cases, the multiplicities of all finite critical points are smaller than (and hence, coprime to) $p=\deg(f)$. The assertion is now immediate from Theorem \ref{thm:main}.
 \end{proof}

 \begin{proof}[Proof of Corollary \ref{cor:odddeg}]
 Any root of $g$ is a root of odd multiplicity of $f'$ and hence has even multiplicity under $f$. Since $\deg(f)$ is odd, the assertion follows immediately from Theorem \ref{thm:main}.
 \end{proof}
 
 \begin{proof}[Proof of Corollary \ref{cor:trinom}]
 a) is immediate from Theorem \ref{thm:main}, since here $a$ has multiplicity $k$. For b), calculation of the derivative shows that the critical points are exactly the $\gamma_i$ ($i=0,\dots, d-k$). These are furthermore pairwise distinct by the assumptions on the parameters, and their multiplicities under $f$ are $k(>1)$ for $\gamma_0$ and $2$ for all other $\gamma_i$. Since these multiplicities are coprime to $d$ by assumption, the assertion follows again from Theorem \ref{thm:main}.
 \end{proof}

  After deriving all stated consequences, it remains to prove Theorem \ref{thm:tech_strong}.
  \begin{proof}[Proof of Theorem \ref{thm:tech_strong}]
 The assertion will follow from combination of Theorem \ref{thm:spec_in} and Lemma \ref{lem:betal}. We note that the result could essentially be read out of the proof of \cite[Theorem 5]{Betal}, which concludes the existence of infinitely many ramified primes in the splitting fields of  $f^{\circ n}(X)-a$, $n\in \mathbb{N}$, without drawing any conclusions about ramification indices (but with such conclusions being possible via obvious modification of the argument).

{\it Step 1: Basic preparations}. 
First note that we may replace $K$ by a finite extension $F\supseteq K$. Indeed, let $K_n$ denote the splitting field of (the numerator of) $f^{\circ n}(X)-a$ over $K$. If, as we will do, we can verify the existence of elements of certain orders in $\textrm{Gal}(F\cdot K_n/F\cdot K_{n-1})$ (i.e., the kernel of the projection $\pi_n: G_n\to G_{n-1}$ after base change from $K$ to $F$), the same will follow for $\textrm{Gal}(K_n/K_{n-1})$, since the latter contains the former as a subgroup. We may and will therefore assume that all critical points of $f$ are $K$-rational. 
We will furthermore assume for simplicity that $a=0$ and $f(\infty)=\infty$.\footnote{In particular, $0$ is then not postcritical for $f$, as required to apply Lemma \ref{lem:betal}.} This is without loss of generality, up to possibly extending the field of definition of $f$, due to the following: Let $b$ be any root of $f(X)-X$ (assumed to lie in $K$ upon finite base change). We have $b\ne a$, since $a$ was assumed not to be a fixed point of $f$. 
Pick a $K$-rational M\"obius transformation $\mu$ with $\mu(0)=a$ and $\mu(\infty)=b$, and set $\tilde{f}=\mu^{-1}\circ f\circ \mu$. Then $\tilde{f}(\infty)=\infty$, and furthermore the splitting fields of $f^{\circ n}(X)-a$ and of $\tilde{f}^{\circ n}(X)$ coincide for all $n$.

{\it Step 2}. 
We will construct elements of order divisible by $q$ in $\ker(\pi_n)$ for infinitely many $n$ as inertia group generators at suitable primes in the splitting field $K_n/K$ of $f^{\circ n}(X)$. To this end, pick, for a prime $p$ of $K$, the smallest $n:=n(p)\in \mathbb{N}$ such that  $\nu_p(f^{\circ n-1}(t_i))$ is positive and coprime to $q$, as long as such $n$ exists. Lemma \ref{lem:betal} guarantees the existence of infinitely many such primes $p$ as $n$ runs through all natural numbers. We may of course assume, via exempting finitely many $p$, that $p$ is at most tamely ramified in $K_{n(p)}/K$. We will show that $p$ ramifies of ramification index divisible by $q$ in $K_{n(p)}/K$. Conversely, by finiteness of the set of ramified primes in a finite extension, we may of course iteratively pick $p$ such that it does not ramify in any of the fields $K_{n(p')}$ for any of the finitely many previously picked primes $p'$, and will thus indeed obtain an infinite sequence of integers $n$ with the desired properties. We may assume that the ramification index at $p$ in the splitting field $K_{n-1}$ of $f^{\circ n-1}(X)$ is coprime to $q$, or else the minimal $k<n$ such that the ramification index at $p$ in $K_k$ is divisible by $q$ yields an element of order divisible by $q$ in the kernel $\ker(\pi_k)$ (just raise the inertia group generator at $p$ to the power its order modulo $\ker(\pi_k)$). 

Write the rational function $f$ as  \begin{equation}
\label{eq1}
f(X)=\frac{f_1(X)}{f_2(X)}\end{equation} with coprime  polynomials $f_1,f_2\in K[X]$, and recall that $\deg(f_2)<\deg(f_1)$ by our assumption $f(\infty)=\infty$.
We may then divide by the leading coefficient of $f_1(X)$ to assume $f_1(X)-tf_2(X)\in K[t][X]$ is monic. Upon excluding finitely many primes $p$, we may assume that $p$ is a good prime for $f_1(X)-tf_2(X)$. In particular, via Condition i) in Theorem \ref{thm:spec_in}, this means that all coefficients of $f_1$ and $f_2$ lie in the local ring $R_{(p)}$ of $p$, and via Condition iii), so does $t_i$. 
Let $\mathfrak{p}$ be a prime of $K_{n-1}$ extending $p$. As in the proof of Lemma \ref{lem:betal}, homogenize Equation \eqref{eq1} as $f(X/Y)=\frac{F_1(X,Y)}{G_1(X,Y)}$, and similarly \begin{equation}
\label{eq2}
f^{\circ j}(X/Y)=\frac{F_j(X,Y)}{G_j(X,Y)}
\end{equation}
with suitable homogeneous polynomials $F_j,G_j\in R_{(p)}[X,Y]$ for all $j\in \mathbb{N}$.

We may assume, as in the proof of Lemma \ref{lem:betal}, that $p$ does not divide the resultant of $F_1$ and $G_1$,
 and hence, since $F_j(X,Y)=F_1(F_{j-1}(X,Y), G_{j-1}(X,Y))$ and $G_j(X,Y)= G_1(F_{j-1}(X,Y), \\ G_{j-1}(X,Y))$, that $\nu_p(G_{n-1}(t_i,1))=0$ and thus $\nu_p(f^{\circ n-1}(t_i)) = \nu_p(F_{n-1}(t_i,1))$. 
Then write $F_{n-1}(X,Y)=\prod_{j=1}^N (X-\delta_jY)$ over $K_{n-1}$. Here, the $\delta_j$ are all $\mathfrak{p}$-integral, since $F_{n-1}$ is inductively seen to be monic in $X$. 
We have $\sum_{j=1}^N \nu_{\mathfrak{p}}(t_i-\delta_j) = \nu_{\mathfrak{p}}(f^{\circ n-1}(t_i)) = \alpha\cdot \nu_p(f^{\circ n-1})(t_i)$, where $\alpha\in \mathbb{N}$ is coprime to $q$, since the ramification index of $\mathfrak{p}$ over $p$ is coprime to $q$. By assumption, the right side of the equality is coprime to $q$, hence $\nu_{\mathfrak{p}}(t_i-\delta_j)$ is coprime to $q$ (and also positive, by $\mathfrak{p}$-integrality of the argument) for at least one $j$. 
Consider the splitting field $L_j$ of $f(X)-\delta_j$ over $K_{n-1}$. This clearly embeds into $K_n$ (since $f^{\circ n}(\delta_j)=0$). Moreover, $\mathfrak{p}$ can be assumed to be good for $f(X)-t$ over $K_{n-1}$ by Corollary \ref{cor:base_change}, since $p$ is good for $f(X)-t$ over $K$. Thus, by Theorem \ref{thm:spec_in}, $\mathfrak{p}$ ramifies in $L_j/K_{n-1}$ of ramification index $\frac{|I_{t_i}|}{\textrm{gcd}(|I_{t_i}|, \nu_{\mathfrak{p}}(t_i-\delta_j))}$, with the numerator, but not the denominator of the latter fraction being divisible by $q$. This concludes the proof.
  \end{proof}

  We give an analog of Theorem \ref{thm:tech_strong} working under weaker assumptions, namely {\it not} requiring a point of multiplicity  divisible by a prime not dividing $\deg(f)$. This relaxation necessitates some extra ideas in the proof, and 
  unlike before, we will not be able to derive a conclusion about elements of $G_n:=\textrm{Gal}(f^{\circ n}(X)-a/K)$ fixing {\it all} roots of $f^{\circ n-1}(X)-a$ (i.e., lying in the kernel of $\pi_n: G_{n}\to G_{n-1}$), but only about elements fixing at least {\it one} root (i.e., lying in a a certain block stabilizer of the imprimitive permutation group $G_n$). 
 This will only allow us to invoke part b), not part a) of the group theoretical result Corollary \ref{cor:stabpr}, which may seem less attractive. Nevertheless, it enables us to cover some new classes of polynomials, such as the ones in Theorem \ref{thm:main_extra}.
  \begin{theorem}
  \label{thm:tech_stronger}
  Let $f\in K(X)$ and let $a\in K$ be neither postcritical nor a fixed point of $f$. 
 Let $t_i$ be a wandering critical point of multiplicity $d$ under $f$, and let $q$ be a prime power dividing $d$. Let  $G:=\mathrm{Gal}(f(X)-t/K(t))$, and let $\sigma_i\in G$ be an inertia group generator over $t\mapsto t_i$ in the splitting field of $f(X)-t$.  Then for infinitely many $n\in \mathbb{N}$, there exists a root $\alpha_{n-1}$ of $f^{\circ n-1}(X)-a$ such that the group $H_n:=\textrm{Gal}(f(X)-\alpha_{n-1}/K(\alpha_{n-1}))$ contains an element $y\in \langle \sigma_i\rangle$ (up to conjugation in $G$), of order $\textrm{ord}(y)$ divisible by $q$.
  \end{theorem}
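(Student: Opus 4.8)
The plan is to run the scheme of the proof of Theorem~\ref{thm:tech_strong}, but with one new ingredient: where that argument works over the full splitting field $K_{n-1}$ of $f^{\circ n-1}(X)-a$ (and over $K$ itself), I would instead work over a field $\kappa_{n-1}=K(\alpha_{n-1})$ generated by a \emph{single} root, and extract the needed divisibility from the field norm $N_{\kappa_{n-1}/K}$ rather than from a partial product over a Galois orbit. I begin with the same harmless normalizations as in the proof of Theorem~\ref{thm:tech_strong}: after a finite base change we may assume all critical points of $f$ are $K$-rational, and after a M\"obius conjugation (possible since $a$ is not a fixed point of $f$) we may assume $a=0$ and $f(\infty)=\infty$; a further finite base change $K\subseteq F$ only replaces $H_n$ by a subgroup, so it suffices to produce the desired element over $F$. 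Write $f=f_1/f_2$ with $f_1(X)-tf_2(X)\in K[t][X]$ monic in $X$. Let $c:=f(t_i)$ be the critical value of $t_i$; it is finite, since otherwise $t_i$ would be a pole of $f$ and hence (as $f(\infty)=\infty$) a preperiodic point, contradicting that $t_i$ is wandering. Let $\sigma_i$ be the inertia generator over $t\mapsto c$ in the splitting field of $f(X)-t$; since $\sigma_i$ permutes the roots of $f(X)-t$ with cycle lengths equal to the multiplicities of the preimages of $c$, one of which is $d$ (the preimage $t_i$), we get $q\mid d\mid \mathrm{ord}(\sigma_i)=|I_c|$, so $\langle\sigma_i\rangle$ contains elements of order divisible by $q$. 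Write $q=p_0^k$, and enlarge a finite set $\mathcal S$ of primes of $O_K$ to contain all primes not good for $f_1(X)-tf_2(X)$ over $K$ in the sense of Theorem~\ref{thm:spec_in}, together with the prime divisors of the resultant of the homogenizations of $f_1,f_2$, exactly as in the proofs of Theorem~\ref{thm:tech_strong} and Lemma~\ref{lem:betal}.

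Next I would apply Lemma~\ref{lem:betal}(b) to the wandering point $c$, with $e=p_0$ and the set $\mathcal S$: this produces infinitely many $n\in\nn$ for which there is a prime $p\notin\mathcal S$ with $\nu_p(f^{\circ n-1}(c))$ positive and coprime to $p_0$. Fix such $n$ and $p$. Factoring the (monic) numerator of $f^{\circ n-1}(X)-a$ over $K$ into monic irreducibles $h_1,\dots,h_s$ with $p$-integral coefficients (Gauss's lemma over the localization at $p$), and evaluating at $X=c$ while using $p\notin\mathcal S$ to control denominators as in the proof of Theorem~\ref{thm:tech_strong}, gives $\sum_l\nu_p(h_l(c))=\nu_p(f^{\circ n-1}(c))$, which is positive and coprime to $p_0$. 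Hence some $h_l$ has $\nu_p(h_l(c))$ coprime to $p_0$ (and positive, since every summand is $\ge 0$ by $p$-integrality of $c$ and of the roots). Let $\alpha_{n-1}$ be any root of this $h_l$ and set $\kappa_{n-1}:=K(\alpha_{n-1})$. The crux is the norm identity $N_{\kappa_{n-1}/K}(\alpha_{n-1}-c)=\pm h_l(c)$, which together with $\nu_p\bigl(N_{\kappa_{n-1}/K}(x)\bigr)=\sum_{\mathfrak p\mid p}f(\mathfrak p/p)\,\nu_{\mathfrak p}(x)$ shows that $\sum_{\mathfrak p\mid p}f(\mathfrak p/p)\,\nu_{\mathfrak p}(\alpha_{n-1}-c)$ is coprime to $p_0$; consequently there is a prime $\mathfrak p$ of $\kappa_{n-1}$ above $p$ with $\nu_{\mathfrak p}(\alpha_{n-1}-c)$ coprime to $p_0$ (again automatically positive). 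I stress that this step uses \emph{no} control on the ramification or residue degrees of $\kappa_{n-1}/K$: the factors $f(\mathfrak p/p)$ are simply absorbed. This is exactly what fails in the approach over $K_{n-1}$ (where the analogous step needs $\mathfrak p$ unramified over $p$, arranged there by dropping to a lower level, an option not available here) and is why the present conclusion is confined to the block stabilizer $H_n$ rather than a block kernel.

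I would then apply Theorem~\ref{thm:spec_in} over the base field $\kappa_{n-1}$, to $F(t,X)=f_1(X)-tf_2(X)$ and the specialization $t\mapsto\alpha_{n-1}$. This value is admissible: it is finite, and it is not a branch point of $f(X)-t$ (a critical value of $f$), for $f^{\circ n-1}(\alpha_{n-1})=a$ would then exhibit $a$ as a postcritical point, contrary to hypothesis; moreover, as $p$ is good the critical values of $f$ are pairwise incongruent and $\mathfrak p$-integral, so $c$ is the unique branch point met by $\alpha_{n-1}$ modulo $\mathfrak p$. The prime $\mathfrak p$ is good for $F$ over $\kappa_{n-1}$ by Corollary~\ref{cor:base_change}, since it extends the good prime $p$ of $K$. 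As $m_{\mathfrak p}(\alpha_{n-1},c)=\nu_{\mathfrak p}(\alpha_{n-1}-c)>0$, Theorem~\ref{thm:spec_in}(b) yields that $\mathfrak p$ ramifies in the splitting field $L$ of $f(X)-\alpha_{n-1}$ over $\kappa_{n-1}$, that its inertia group at $\mathfrak p$ is, up to conjugation in $G$, a subgroup of $I_c=\langle\sigma_i\rangle$, and that the ramification index equals $|I_c|/\gcd\bigl(|I_c|,\nu_{\mathfrak p}(\alpha_{n-1}-c)\bigr)$. Since $q=p_0^k$ divides $|I_c|$ while $\gcd\bigl(|I_c|,\nu_{\mathfrak p}(\alpha_{n-1}-c)\bigr)$ is coprime to $p_0$, this index is divisible by $q$. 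Being tame ($p\nmid|G|$), the inertia group at $\mathfrak p$ is cyclic of that order; a generator $y$ of it then lies in $\langle\sigma_i\rangle$ up to conjugation in $G$, has order divisible by $q$, and lies in $\Gal(L/\kappa_{n-1})=H_n$. Letting $n$ range over the infinitely many values furnished by Lemma~\ref{lem:betal} completes the argument.

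The single genuinely new ingredient is the norm identity in the second paragraph, and the one conceptual obstacle it is designed to overcome is the lack of control on ramification in the non-Galois extension $\kappa_{n-1}/K$. The remaining points — excluding $f(t_i)=\infty$ (via $t_i$ wandering), the admissibility of $\alpha_{n-1}$ as a specialization value (via $a$ not postcritical), the monic-ness and denominator bookkeeping for rational $f$, and the harmlessness of the base change — are all routine and are dealt with as in the proof of Theorem~\ref{thm:tech_strong}.
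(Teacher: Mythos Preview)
Your proof is correct and takes a genuinely different, more direct route than the paper's. Where you factor the numerator of $f^{\circ n-1}(X)$ directly over $K$ and use the norm identity $N_{\kappa_{n-1}/K}(\alpha_{n-1}-c)=\pm h_l(c)$ together with $\nu_p\bigl(N_{\kappa_{n-1}/K}(x)\bigr)=\sum_{\mathfrak p\mid p}f(\mathfrak p/p)\,\nu_{\mathfrak p}(x)$ to locate a prime $\mathfrak p$ of $\kappa_{n-1}=K(\alpha_{n-1})$ with $\nu_{\mathfrak p}(\alpha_{n-1}-c)$ positive and coprime to $p_0$, the paper instead ascends all the way to the full splitting field $K_{n-1}$, descends to the fixed field $M$ of an inertia group there (so that the restricted prime $\tilde{\mathfrak p}$ is unramified over $p$), factors over $M$, and then re-ascends to $M'=M(\delta_1)$. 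The point of that detour is that $K_{n-1}/M$ is cyclic (tame inertia, since the prime divisors of $|G_{n-1}|$ are among those of $|G|$, which are excluded by goodness of $p$), so $M'/M$ is cyclic and totally ramified; Galois symmetry then forces all $\nu_{\widehat{\mathfrak p}}(c-\delta_k)$ to coincide, whence each equals $\nu_{\tilde{\mathfrak p}}(\rho_{j_0}(c,1))$ and is coprime to $q$. Your norm argument bypasses $K_{n-1}$, $M$ and $M'$ entirely, is more elementary, and makes transparent that no control whatsoever on ramification in the non-Galois extension $\kappa_{n-1}/K$ is needed --- the residue degrees $f(\mathfrak p/p)$ are simply absorbed into the sum. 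The paper's route, by contrast, lands the inertia element first in the smaller group $\Gal(L/M')\le H_n$, which could be useful if one later wished to track its position relative to the block structure coming from $K_{n-1}$; but for the stated conclusion your argument is the cleaner one.
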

  \begin{proof}
  We keep the assumptions on $f$ and $K$ (including in particular the assumption $a=0$) and the notation from the setup in Step 1 of the proof of Theorem \ref{thm:tech_strong}. Pick $n\in\mathbb{N}$ and a good prime $p$ for $f(X)-t$
  such that $\nu_p(f^{\circ n-1}(t_i))$ is positive and coprime to $q$. 
  Let $\mathfrak{p}$ be a prime of $K_{n-1}$ extending $p$, and factorize the numerator of $f^{\circ n-1}(X/Y)$ in the expression from Equation \eqref{eq2} as \begin{equation}
  \label{eq3}
  \prod_{j=1}^N (X-\delta_jY)\in K_{n-1}[X,Y].
  \end{equation}
  Let $I_\mathfrak{p}$ be the inertia group at $\mathfrak{p}$ in $K_{n-1}/K$ and $M\subset K_{n-1}$ the fixed field of $I_{\mathfrak{p}}$. Let $\tilde{\mathfrak{p}}$ be the prime of $M$ which is extended by $\mathfrak{p}$, so that $\tilde{\mathfrak{p}}/p$ is unramified and $\mathfrak{p}/\tilde{\mathfrak{p}}$ is totally ramified, say, of degree $h$. In particular, for any $x\in K$, we have $\nu_p(x)=\nu_{\tilde{\mathfrak{p}}}(x)$, and $\nu_{\mathfrak{p}}(x) = h\cdot\nu_p(x)$.
In particular, if $\rho_1(X,Y),\dots, \rho_R(X,Y)\in M[X,Y]$ denote the irreducible factors of \eqref{eq3} over $M$, we have 
$$\sum_{j=1}^{R} \nu_{\tilde{\mathfrak{p}}}(\rho_j(t_i,1))= \nu_{\tilde{\mathfrak{p}}}(f^{\circ n-1}(t_i)) = \nu_{p}(f^{\circ n-1}(t_i)),$$ which by assumption is positive and coprime to (the prime power) $q$. There is therefore at least one index $j_0\in \{1,\dots, R\}$ such that $\nu_{\tilde{\mathfrak{p}}}(\rho_{j_0}(t_i,1))$ is 
coprime to $q$.
Now factorize further $\rho_{j_0}(X,Y) = \prod_{k=1}^\ell (X-\delta_kY)$ (with the appropriately numbered roots $\delta_1,\dots, \delta_\ell\in K_{n-1}$).

Set $M':=M(\delta_1)$; i.e., $M'$ is a cyclic degree-$\ell$ extension of $M$, totally ramified at $\tilde{\mathfrak{p}}$. Let $\widehat{\mathfrak{p}}$ be the unique extension of $\tilde{\mathfrak{p}}$ in $M'$. Since the elements of $\textrm{Gal}(M'/M)$ fix $\widehat{\mathfrak{p}}$ and $t_i$ while permuting $\delta_1,\dots, \delta_\ell$ transitively, all the valuations $\nu_{\widehat{\mathfrak{p}}}(t_i-\delta_k)$, $k=1,\dots, \ell$ are identical, and since furthermore $\sum_{k=1}^\ell \nu_{\widehat{\mathfrak{p}}}(t_i-\delta_k) = \nu_{\widehat{\mathfrak{p}}}(\rho_{j_0}(t_i,1)) = \ell\cdot \nu_{\tilde{\mathfrak{p}}}(\rho_{j_0}(t_i,1))$, the valuations $\nu_{\widehat{\mathfrak{p}}}(t_i-\delta_k)$ must after all be (positive and) coprime to $q$.

Consider the splitting field $L$ of $f(X)-\delta_1$ over $M'$.  The prime $\widehat{\mathfrak{p}}$ can be assumed to be good for $f(X)-t$ over $M'$ by Corollary \ref{cor:base_change}, since $p$ is good for $f(X)-t$ over $K$. Thus, by Theorem \ref{thm:spec_in}, $\widehat{\mathfrak{p}}$ ramifies in $L/M'$, with inertia group equal to a subgroup of $I_{t_i}$ (up to conjugacy), of order $\frac{|I_{t_i}|}{\textrm{gcd}(|I_{t_i}|, \nu_{\widehat{\mathfrak{p}}}(t_i-\delta_1))}$. Since the numerator is divisible by $q$ whereas the denominator is coprime to $q$, we have identified a subgroup of $I_{t_i}$ of order divisible by $q$ inside $\textrm{Gal}(L/M')$. Setting $\alpha_{n-1}:=\delta_1$, the group $\textrm{Gal}(L/M')$ embeds naturally as a subgroup into $H_n$ (via restriction to the splitting field of $f(X)-\alpha_{n-1}$ over $K(\alpha_{n-1})$), from which the assertion follows.
  \end{proof}

  \begin{proof}[Proof of Theorem \ref{thm:main_extra}]
  Let $E/K(t)$ be the splitting field of $f(X)-t$. By assumption, the inertia group over $t\mapsto b$ in $E/K(t)$ is generated by a single cycle $\sigma$ of (odd) length $k$ (since this is the multiplicity of the only critical point $a$ mapped to $b$ by $f$. Let $n:=\deg(f)$. Since $k$ is assumed to not divide $n$, there exist an (odd) prime power $q=p^e$ dividing $k$, but not dividing $n$ (i.e., the $p$-adic valuation of $n$ is smaller than $e$). Note that the elements of order $q$ in $\langle \sigma \rangle$ are products of an odd number of disjoint $q$-cycles.
  Due to Theorem \ref{thm:tech_stronger} and Corollary \ref{cor:stabpr}b),
   it suffices to show that  such elements cannot be contained in an iterated wreath product $AGL_1(p_r)\wr\dots\wr AGL_1(p_1)$, where $p_1,\dots, p_r$ are primes with $p_1\cdots p_r=n$. Let $d$ be the $p$-adic valuation of $n$. Since $q=p^e$ does not divide $n$, we have $d<e$. Let $\sigma$ be an element of $AGL_1(p_{r})\wr\dots\wr AGL_1(p_1)$ all of whose cycle lengths are $1$ or $q$. We will show that $\sigma$ must necessarily have an even number of $q$-cycles; the assertion obviously follows from this claim. 
  %
  
  As outlined in Section \ref{sec:prelim_wreath}, one has a sequence of partitions $\mathcal{S}_i:=(\lambda_{i,1},\dots, \lambda_{i,N_i})$ for each $i\in \{1,\dots, r\}$, all corresponding to cycle types in $AGL_1(p_i)$; and, for every cycle of $\sigma$, a unique sequence  $((a_1,b_1),\dots, (a_r,b_r))$ with $a_i\in \{1,\dots, N_i\}$ and $b_i\in \{1,\dots, \#\lambda_{i,a_i}\}$  identifying the elements of $\mathcal{S}_i$, $i=1,\dots, r$, used to compose the respective cycle. For each $q$-cycle $c$ of $\sigma$, denote by $i(c)$ the lowest index $i$ for which the $b_i$-th part of $\lambda_{i,a_i}$ is larger than $1$ (i.e., among the block systems induced by the wreath product, $i(c)$ identifies the level of the ``coarsest" block system on which $c$ acts nontrivially). We define an equivalence relation on the set of $q$-cycles of $\sigma$, calling two cycles equivalent if they share the same $(i(c), a_{i(c)})$. We claim that each equivalence class is of even cardinality. For this, note first that, for each cycle $c$, $\lambda_{i(c), a_{i(c)}}$ must be an element of $AGL_1(p_{i(c)})$ {\it other} than a $p_{i(c)}$-cycle, and hence, have a unique fixed point (and $p$-cycles otherwise). Indeed, assume on the contrary that $\lambda_{i(c),a_{i(c)}}$ is a full cycle. Since $q$ does not divide $n=p_1\cdots p_r$, there must be an index $j>i(c)$ for which $\lambda_{j,a_j}$ is {\it not} a full cycle, i.e., has a fixed point. Due to this fixed point, however, every such $\lambda_{j,a_j}$ does {\it not} increase the minimal cycle length of the partition it extends and since this partition was assumed to factor through a full $p$-cycle, this minimal cycle length is $>1$. All such cycle lengths need to be eventually extended into $q(=p^e)$-cycles in $\sigma$, which is thus possible only be using a $p$-cycle inside $AGL_1(p)$ a total of $e$ times. This is however impossible, since due to $p^e$ not dividing $n$, there are not sufficiently many $p_i$ equal to $p$.
   
  We have thus shown that for each $q$-cycle $c$ of $\sigma$, $\lambda_{i(c),a_{i(c)}}$ is a partition of $p_{i(c)}$ of the form $(1,p,p,\dots, p)$. This enforces $p_{i(c)}\equiv 1$ mod $p$, and hence even $p_{i(c)}\equiv 1$ mod $2p$ (since both $p$ and $p_{i(c)}$ are odd primes), meaning that the number of $p$-cycles in $\lambda_{i(c), a_{i(c)}}$ is necessarily even. Furthermore, by definition of $i(c)$, the cycle $c$ necessarily extends some $p$-cycle of this partition, and conversely each such $p$-cycle
   lifts only to cycles of length $q$ of $\sigma$, and hence necessarily to the {\it same} number of such $q$-cycles 
  (namely, $(p_{i(c)+1}\cdots p_{i_r})/(q/p)$ of them). Thus, the number of $q$-cycles sharing the same $(i(c), a_{i(c)})$ must be even, proving the claim and completing the proof of the theorem.
  \end{proof}

  Using (the so-far unused) Lemma \ref{lem:betal}b) gives a strengthening of Theorem \ref{thm:tech_stronger}, allowing to deduce ``density zero" results even in some cases  where every ramification index divides the degree $\deg(f)$. We give one particular example.
  \begin{lemma}
  \label{lem:more_ex} Let $f=(X-a)^k(X-b)^mg(X)+c\in K[X]$, where $(X-a)(X-b)g(X)$ is separable, 
   $k$ and $m$ are prime numbers with $k\le m-2$, and $c$ is a wandering point of $f$. Then the set of stable primes of $f$ is of density zero.
   \end{lemma}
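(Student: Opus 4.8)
\emph{Reductions.} From $f(a)=f(b)=c$ and the fact that $c$ is wandering, both $a$ and $b$ are wandering critical points of $f$, of multiplicities $k$ and $m$; since $(X-a)(X-b)g(X)$ is separable, the fibre $f^{-1}(c)$ consists of $a$, $b$ and the $\deg g$ simple roots of $g$, so the inertia generator $\sigma_c$ over $t\mapsto c$ in the splitting field of $f(X)-t$ over $K(t)$ has cycle type exactly $(k,m,1^{\deg g})$, of order $km$. If $k\nmid\deg f$ (resp.\ $m\nmid\deg f$), Theorem~\ref{thm:main} applied to the critical point $a$ (resp.\ $b$) already gives the conclusion, so I may assume $km\mid\deg f$; as $\deg f=k+m+\deg g$ with $k\ge2$, $m\ge5$, this forces $\deg g\ge1$. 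As usual, $f^{\circ n}(X)$ may be assumed irreducible over $K$ for all $n$ and $0$ to be neither a fixed point nor postcritical for $f$ (else the stable primes are finite in number), and, after a harmless finite base change, all critical points of $f$ to be $K$-rational.

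\emph{A cycle-type lemma for wreath products of affine groups.} Using the description of cycle structures in $W:=AGL_1(p_r)\wr\dots\wr AGL_1(p_1)\le S_{\deg f}$ ($\prod p_i=\deg f$) from Section~\ref{sec:prelim_wreath}, I claim: if $\tau\in W$ has exactly one cycle of some prime length $\ell$, has a fixed point, and has no cycle of length a proper multiple of $\ell$, then $p_r=\ell$ (or $\ell=2$, $p_r=3$). Indeed, the prime $\ell$ enters the length of that cycle at a single level $i_0$, where the corresponding cycle type in $AGL_1(p_{i_0})$ has a part equal to $\ell$, hence is a full $p_{i_0}$-cycle (so $p_{i_0}=\ell$) or is of type $(1,\ell^{(p_{i_0}-1)/\ell})$ (so $\ell\mid p_{i_0}-1$); the absence of cycles of length a proper multiple of $\ell$ forces the sub-branches below level $i_0$ to be fixed pointwise, and then having exactly one $\ell$-cycle forces $i_0=r$ and, in the second case, $(p_r-1)/\ell=1$. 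Two consequences: (i) the cycle type $(k,m,1^{\deg g})$ of $\sigma_c$ lies in no such $W$, since its single $k$-cycle would force $p_r\in\{k,k+1\}$ and its single $m$-cycle would force $p_r\in\{m,m+1\}$, and these sets are disjoint as $k\le m-2$; (ii) no subgroup of any such $W$ contains both an element of cycle type $(k,1^{\deg f-k})$ and one of cycle type $(m,1^{\deg f-m})$, since the first forces $p_r\in\{k,k+1\}\subseteq\{2,\dots,m-1\}$ while the second forces $p_r=m$ (the value $m+1$ being even, hence not prime).

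\emph{Arithmetic input and the dichotomy.} Apply Lemma~\ref{lem:betal}b) to the wandering point $c$ with $e:=km$: for infinitely many $n$ one has $\gcd\bigl(\{km\}\cup\{\nu_p(f^{\circ n-1}(c)):p\notin\mathcal S,\ \nu_p(f^{\circ n-1}(c))>0\}\bigr)=1$, so for each such $n$ there are primes $p_1,p_2\notin\mathcal S$ with $\nu_{p_1}(f^{\circ n-1}(c))$ coprime to $k$ and $\nu_{p_2}(f^{\circ n-1}(c))$ coprime to $m$; here $\mathcal S$ collects (and is successively enlarged by) the primes that are not good for $f(X)-t$ and the primes already used. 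Running the argument in the proof of Theorem~\ref{thm:tech_stronger} with the branch point $c$ and the prime $p_1$ produces a root $\alpha_{n-1}$ of $f^{\circ n-1}(X)$ such that $H_n:=\Gal(f(X)-\alpha_{n-1}/K(\alpha_{n-1}))$ contains (up to conjugacy) an element generating a subgroup of $\langle\sigma_c\rangle$ of order $km/\gcd(km,\nu)$ with $\nu$ coprime to $k$, i.e.\ an element of cycle type $(k,m,1^{\deg g})$ or $(k,1^{\deg f-k})$; using $p_2$ in the same way (and noting that all roots of the irreducible $f^{\circ n-1}(X)$ yield $K$-conjugate, hence permutation-isomorphic, groups $H_n$) shows $H_n$ also contains an element of cycle type $(k,m,1^{\deg g})$ or $(m,1^{\deg f-m})$. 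Now dichotomize: if the type $(k,m,1^{\deg g})$ occurs in $H_n$ for infinitely many of these $n$, then by (i) and Corollary~\ref{cor:stabpr}b) the set of stable primes has density $0$; otherwise, for all large such $n$ the group $H_n$ contains both a $(k,1^{\deg f-k})$- and an $(m,1^{\deg f-m})$-element, so by (ii) $H_n$ is a subgroup of no wreath product $AGL_1(p_r)\wr\dots\wr AGL_1(p_1)$ with $\prod p_i=\deg f$, and the mechanism underlying Corollary~\ref{cor:stabpr}b) --- iterated application of case~a) of Proposition~\ref{prop:stabpr_gp} at the step where a primitive constituent of $H_n$ fails to embed into $AGL_1(p)$ --- again forces density $0$.

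\emph{Main obstacle.} The decisive ingredient is the combinatorial analysis (i)--(ii): determining which cycle types, and which subgroups, can occur inside an iterated wreath product of one-dimensional affine groups, via the path description of Section~\ref{sec:prelim_wreath}. The secondary subtlety is that Lemma~\ref{lem:betal}b) only furnishes valuations coprime to $k$ and to $m$ at two potentially different primes, never a single valuation coprime to $km$; this is exactly why Theorem~\ref{thm:tech_stronger} cannot simply be quoted for $\sigma_c$, and the dichotomy together with the stronger statement (ii) is needed instead.
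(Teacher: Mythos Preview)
Your proof is correct and follows essentially the same route as the paper's: reduce to the case $km\mid\deg f$ via Theorem~\ref{thm:main}, use Lemma~\ref{lem:betal}b) with $e=km$ to find (for infinitely many $n$) good primes whose valuations of $f^{\circ n-1}(c)$ are coprime to $k$, respectively to $m$, run the proof of Theorem~\ref{thm:tech_stronger} at these primes to land elements of $\langle\sigma_c\rangle$ in the same $H_n$ (using irreducibility of $f^{\circ n-1}$ to pass between roots), and then rule out containment in any $AGL_1(p_r)\wr\dots\wr AGL_1(p_1)$ by a combinatorial analysis of where a prime-length cycle with many fixed points can sit in such a wreath product.

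The only substantive difference from the paper is your dichotomy in the last step, which is unnecessary: whatever element $y_1\in\langle\sigma_c\rangle$ of order divisible by $k$ you obtain from $p_1$, its $m$-th power is always a $k$-cycle of type $(k,1^{\deg f-k})$; likewise $y_2^k$ from $p_2$ is always an $m$-cycle. Thus $H_n$ always contains both single-cycle types, and your consequence~(ii) alone suffices---your consequence~(i) and the dichotomy can be dropped. This is exactly what the paper does. Your explicit cycle-type lemma is a cleaner formulation of what the paper calls ``obvious'' (namely that a single prime-length cycle with only fixed points otherwise must arise at level $r$), and your treatment of the edge case $\ell=2$, $p_r=3$ is more careful than the paper's somewhat terse appeal to ``$AGL_1(p_r)$ does not contain a cycle with more than one fixed point''.
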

   \begin{proof} The inertia group at $t\mapsto c$ in the splitting field of $f(X)-t$ is generated by an element of cycle type $(k.m.1^{\deg(g)})$. If one of $k,m$ does not divide $\deg(f)$, we may simply apply Theorem \ref{thm:main}, so assume $km|\deg(f)$. From 
  Lemma \ref{lem:betal}b), we derive the existence of infinitely many $n\in \mathbb{N}$ such that $\mathrm{gcd}(\{km\}\cup \{\nu_p(f^{\circ n}(t_i)): p \text{\ ``good" for $f(X)-t$ over $K$} \text{ and } \nu_p(f^{\circ n}(t_i)) > 0\}) = 1$; hence this set of valuations $\nu_p$ contains at least one value coprime to $k$ and one coprime to $m$.
  Carrying out the proof of Theorem \ref{thm:tech_stronger} for both of these primes individually, we derive the existence of infinitely many $n\in \mathbb{N}$, and for each such $n$ the existence of a root $\alpha$ of $f^{\circ n-1}(X)$ such that the stabilizer of $\alpha$ in $\textrm{Gal}(f^{\circ n}(X)/K)$ (i.e., the block stabilizer of some block induced by the decomposition $f^{\circ n} = f^{\circ n-1}\circ f$) contains an element acting as a $k$-cycle on the roots of $f(X)-\alpha$, and another one acting as an $m$-cycle.\footnote{That we may, without loss of generality, choose the {\it same} root $\alpha$ for both $k$ and $m$ is due to the fact that $f^{\circ n-1}(X)$ may be assumed irreducible - or else there are no stable primes - and hence stabilizers of roots are pairwise conjugate.}  For this to be compatible with a positive density set of stable primes, we must find a transitive subgroup $H\le AGL_1(p_r)\wr\dots\wr AGL_1(p_1)$, with $\prod_{i=1}^r p_i = \deg(f)$, such that $H$ contains both a $k$-cycle and an $m$-cycle. Building such a cycle via composing partitions as explained in Section \ref{sec:prelim_wreath} and recalling that $k$ and $m$ are prime numbers, it becomes obvious that these cycles can only arise from a cycle of the respective length on the ``uppermost level", i.e., from (the restriction to some block of) an element in the kernel of the projection to  $AGL_1(p_{r-1})\wr\dots\wr AGL_1(p_1)$; in other words, $AGL_1(p_r)$ must contain both of these cycles. This is however impossible, since $k\le m-2$ and $AGL_1(p_r)$ does not contain a cycle with more than one fixed point.
  \end{proof}

Note once again that our results use the (unconditionally known) existence of infinitely many prime divisors (with certain extra conditions) in dynamical sequences $(f^{\circ n}(a))_{n\in \mathbb{N}}$. Conditionally on the abc conjecture, much more is known. E.g., under very weak assumptions on $f$, squarefree primitive prime divisors (i.e., dividing the $f^{\circ n}(a)$ exactly once, but dividing no previous term $f^{\circ k}(a)$, $k\in \mathbb{N}$) actually exist not only for infinitely many, but for all but finitely many $n$, cf.\ \cite{GNT}. If this could be shown to hold unconditionally, it would enable to deal with numerous further classes of polynomials regarding the mod-$p$ stability problem considered here, notably since this would open up working with not just one inertia group generator (coming from some critical value $a$ of $f$), but with several ``at a time" (i.e., for one and the same level $n$ of the iteration). We will refrain from carrying out any such analysis in detail, since the present work was deliberately aimed at obtaining unconditional results.

\appendix

 \section{A result for ``large" arboreal representations}
   \label{sec:large}   
 We give some further evidence in favor of Conjecture \ref{conj:bold} via the following observation, which implies that, in order for a postcritically infinite polynomial $f$ and any $a\in K$ to have a positive density set of stable primes, the Galois group needs to be ``much smaller than expected", and in particular of infinite index inside the corresponding dynamical monodromy group. Since this is essentially a result about monodromy groups, it is more straightforward to derive than our main results because combination with the specialization results Theorem \ref{thm:spec_in} and Lemma \ref{lem:betal} is not required.
  \begin{lemma}
  \label{lem:cond}
  Let $f\in K[X]$ be a postcritically infinite polynomial, let 
  $G_{t,n}:=\mathrm{Gal}(f^{\circ n}(X)-t/K(t))$ and $G_{t,\infty}:=\varprojlim_n G_{t,n}$. Similarly, for $a\in K$ let $G_{a,n}:=\mathrm{Gal}(f^{\circ n}(X)-a/K)$ and $G_{a,\infty}:=\varprojlim_n G_{a,n}$. Then there exists a constant $d>1$ (depending only on $\deg(f)$) such that if
  $[G_{t,n}: G_{a,n}] = o(d^n)$ (as a function in $n$),  
  then the density of stable primes of $(f,a)$ is zero. In particular, the conclusion holds as soon as $[G_{t,\infty}:G_{a,\infty}]<\infty$.
  \end{lemma}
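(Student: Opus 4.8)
The plan is to reduce the statement to a group-theoretic fact about the \emph{generic} monodromy groups $G_{t,n}$, via a counting argument comparing their full-cycle statistics with those of the specializations $G_{a,n}$; here and below a \emph{full cycle} at level $n$ means an element acting as a $\deg(f)^n$-cycle on the roots of $f^{\circ n}(X)-a$ (resp.\ of $f^{\circ n}(X)-t$). First I would dispose of two degenerate cases. If $(f,a)$ is not stable, some $f^{\circ N}(X)-a$ is reducible over $K$, hence reducible modulo all but finitely many primes, so the density of stable primes is trivially zero. If $a$ lies in the forward orbit of a critical value, say $a=f^{\circ m}(f(\alpha))$ with $f'(\alpha)=0$, then $f^{\circ(m+1)}(X)-a$ is divisible by $f(X)-f(\alpha)$, which has $\alpha$ as a multiple root and is therefore non-squarefree; so $f^{\circ(m+1)}(X)-a$ is non-squarefree, hence reducible over $K$, and again the density is zero. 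We may thus assume $(f,a)$ is stable and $a$ avoids the postcritical orbit. Under the latter assumption, $t\mapsto a$ is an unramified specialization in the splitting field of each $f^{\circ n}(X)-t$ over $K(t)$, yielding embeddings $G_{a,n}\hookrightarrow G_{t,n}$ under which a full cycle of $G_{a,n}$ stays a full cycle of $G_{t,n}$. By Chebotarev's density theorem (see the paragraph following Proposition~\ref{prop:stabpr_gp}), the density of stable primes of $(f,a)$ equals $\lim_{n\to\infty}c_n$, where $c_n$ is the proportion of full cycles in $G_{a,n}$; since $(c_n)_n$ is non-increasing, it suffices to show $c_n\to 0$.

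The counting step is then immediate: $\#\{\text{full cycles in }G_{a,n}\}\le\#\{\text{full cycles in }G_{t,n}\}$, so
\[
c_n\ \le\ \varepsilon_n\cdot[G_{t,n}:G_{a,n}],\qquad \varepsilon_n:=\frac{\#\{\text{full cycles in }G_{t,n}\}}{|G_{t,n}|}.
\]
Everything therefore reduces to proving that $\varepsilon_n$ decays \emph{geometrically}: $\varepsilon_n\le C\delta^n$ for some $C>0$ and some $\delta=\delta(\deg f)<1$. Granting this, put $d:=1/\delta>1$ (depending only on $\deg f$); then $[G_{t,n}:G_{a,n}]=o(d^n)$ gives $c_n\le C\delta^n\cdot o(d^n)=o(1)$, as wanted. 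In particular, if $[G_{t,\infty}:G_{a,\infty}]<\infty$ then $[G_{t,n}:G_{a,n}]\le[G_{t,\infty}:G_{a,\infty}]$ is bounded, a fortiori $o(d^n)$, so the final assertion follows.

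It remains to establish the geometric decay of $\varepsilon_n$, which is where postcritical infinitude is used. Recall that the finite branch points of $f^{\circ n}(X)-t$ over $K(t)$ are exactly the elements of $P_n:=\{f^{\circ m}(\alpha):\ f'(\alpha)=0,\ 1\le m\le n\}$, and that $P_n=P_1\cup f(P_{n-1})$; hence $P_n=P_{n-1}$ would force $f(P_{n-1})\subseteq P_{n-1}$, whence the postcritical set $P=\bigcup_m P_m=P_{n-1}$ would be finite, contradicting the hypothesis. Thus at every level $n$ there is a branch point of $f^{\circ n}(X)-t$ not present at level $n-1$, whose inertia group is a non-trivial cyclic subgroup of $\ker(G_{t,n}\to G_{t,n-1})$. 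Refining the tower $K(t)\subset K(t,\theta_1)\subset K(t,\theta_2)\subset\cdots$ (with $\theta_j$ a root of $f^{\circ j}(X)-t$ and $f(\theta_j)=\theta_{j-1}$) into steps of prime index and invoking the quantitative iterated form of Proposition~\ref{prop:stabpr_gp} (cf.\ \cite{Koe24}), one shows that this new inertia forces one of the conditions a), b), c) of the proposition to hold at one of the boundedly many refinement steps between levels $n-1$ and $n$. Since the proportion of full cycles never increases down the tower (elementarily, $c_U\le c_V$ for every step) and drops by a factor at most $\delta=\delta(\deg f)<1$ at each such step, this yields $\varepsilon_n\le\delta^n$.

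The main obstacle is precisely this last point: one must exclude that \emph{all} refinement steps between two consecutive levels are of the ``neutral'' type (local group inside some $AGL_1(p)$ with kernel elementary abelian but not of maximal possible order), for which Proposition~\ref{prop:stabpr_gp} gives no contraction; ruling this out relies on the structure theory of iterated monodromy groups of postcritically infinite maps developed in \cite{Koe24}. The situation is transparent when $\deg f$ is a prime $p\ge 5$: no refinement is needed (a subgroup of prime index is maximal), the local group at every level is the monodromy group of $f$ itself, and by the classification of polynomial monodromy groups it fails to lie in $AGL_1(p)$ unless $f$ is linearly related to $X^p$ or to a Dickson polynomial -- both postcritically finite -- so condition a) of Proposition~\ref{prop:stabpr_gp} holds at every level, giving $\varepsilon_n\le 2^{-n}$ at once.
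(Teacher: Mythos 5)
Your reduction is exactly the paper's: embed $G_{a,n}$ into $G_{t,n}$, count full cycles, and note that a full-cycle proportion $\le C\delta^n$ in $G_{t,n}$ together with $[G_{t,n}:G_{a,n}]=o(\delta^{-n})$ forces the full-cycle proportion of $G_{a,n}$ (hence, via Chebotarev, the density of stable primes) to tend to $0$. The problem is that the entire content of Lemma \ref{lem:cond} lies in the geometric decay of $\varepsilon_n$, and at precisely that point your argument stops: you yourself identify the ``neutral'' case --- all refinement steps between level $n-1$ and level $n$ having local group inside some $AGL_1(p)$ and kernel elementary abelian of \emph{submaximal} order, where Proposition \ref{prop:stabpr_gp} gives no contraction --- and then defer its exclusion to ``structure theory of iterated monodromy groups'' in \cite{Koe24}. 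No such result is available there (that reference supplies only the group-theoretic contraction statements already encoded in Proposition \ref{prop:stabpr_gp}), so this is a genuine gap, not a citation of a known fact: as stated, your proof establishes nothing beyond the cases where the neutral configuration happens not to occur, and the prime-degree aside (which moreover imports a monodromy classification the paper never needs) does not cover the general situation.

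The paper closes exactly this gap by a concrete ramification argument rather than by appeal to general structure theory. Write $f=f_1\circ\dots\circ f_r$ with $f_i$ indecomposable and refine the tower accordingly. Postcritical infinitude gives, in each window of $r$ consecutive steps, one step $n$ at which some $(g_1\circ\cdots\circ g_n)(\alpha)$ is a branch point of $K_n/K(t)$ that is new at that level, so its inertia generator $\sigma$ lies in $\ker(G_{t,n}\to G_{t,n-1})$. If cases a) or b) of Proposition \ref{prop:stabpr_gp} fail at this step, then $\deg(g_n)=p_n$ is prime and $\sigma$ acts trivially or as a $p_n$-cycle on every component; this forces $g_n$ to have a finite critical point of multiplicity $p_n$, necessarily its \emph{only} finite critical point, whence $\sigma$ is nontrivial on exactly one component. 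Conjugating by the transitive action on components then shows $\ker(G_{t,n}\to G_{t,n-1})$ contains the \emph{full} power $C_{p_n}^{\deg(g_1\circ\cdots\circ g_{n-1})}$, i.e.\ the kernel has maximal order and case c) of Proposition \ref{prop:stabpr_gp} applies after all. Thus the neutral case cannot occur at that particular step, one contraction factor $\delta<1$ is gained per window of $r$ steps, and the claim follows with $d=\delta^{-1/r}$. Without an argument of this kind your proposal does not prove the lemma.
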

  \begin{proof}
  We claim that for all $f$ in question, the proportion of full cycles (i.e., generators of cyclic transitive subgroups) in $G_{t,n}:=\mathrm{Gal}(f^{\circ n}(X)-t/K(t))$ is at most $d^{-n}$ (with $d>1$ yet to be chosen). 
  This then proves the assertion, since indeed, it then instantly follows that the proportion of full cycles in any sequence of subgroups of $G_{t, n}$ ($n\in \mathbb{N}$) of index $o(d^n)$ is $o(1)$, i.e., converges to $0$ as $n\to \infty$. But if $G_{a,n}$, $n\in \mathbb{N}$ is such a sequence, then the proportion of full cycles equals the density of primes modulo which $f^{\circ n}(X)-a$ is irreducible. 
  
  We show the claim using Proposition \ref{prop:stabpr_gp}. 
  %
 Write $f = f_1\circ\dots\circ f_r$ as a composition of indecomposable polynomials, and for convenience, define the sequence $(g_n)_{n\in \mathbb{N}}$ as $g_{kr+i}=f_i$ for all $k\ge 0$ and $i\in \{1,\dots, r\}$, so that $(g_1\circ\dots\circ g_{kr})(X)-t = f^{\circ k}(X)-t$. Let $K_n$ be the splitting field of $(g_1\circ\dots\circ g_n)(X)-t$ over $K(t)$. By the indecomposability assumption, $\textrm{Gal}(K_n/K_{n-1})$ embeds into a direct power of a primitive group, and its projection to any single component contains a cyclic transitive subgroup (namely, a suitable power of the inertia group generator at $t\mapsto \infty$). Let $c_n$ denote the proportion of full cycles among all elements of $\textrm{Gal}(K_n/K(t))$. From Proposition \ref{prop:stabpr_gp}, we have $c_n\le \delta c_{n-1}$ for a constant $\delta<1$ depending only on $\deg(f)$, unless $\textrm{Gal}(K_n/K_{n-1})$ is an elementary-abelian group (say, of exponent $p_n$) of smaller than maximal-possible order, i.e., not equal to the full direct power $(C_{p_n})^{\deg(g_1\circ\dots\circ g_{n-1})}$. We will show that the latter condition is violated for at least one $n\in \{kr+1,\dots, (k+1)r\}$ (for every $k\in \mathbb{N}$), which directly implies the claim with $d:=\delta^{-1/r}$, thus completing the proof. 
 To this end, recall that $f$ was assumed to be postcritically infinite, and thus, for every $k\in \mathbb{N}$, there is at least one critical point $\alpha$ of $f$ such $f^{\circ k}(\alpha)$ is not in $\{f^{\circ i}(\beta)\mid \beta \text{ a critical point of } f; i<k\}$. This translates to saying that $f^{\circ k}(\alpha)$ is a branch point of $x\mapsto f^{\circ k}(x)$, but not of any $x\mapsto f^{\circ i}(x)$  with $i<k$. Hence, there is at least one $n\in \{kr+1,\dots, (k+1)r\}$ such that $(g_1\circ \cdots \circ g_n)(\alpha)$ is a branch point of $K_n/K(t)$, but not of $K_{n-1}/K(t)$. 
 Then the inertia group generator $\sigma$ at $t\mapsto (g_1\circ \cdots \circ g_n)(\alpha)$ in $K_n/K(t)$ lies in the kernel of the projection to $\textrm{Gal}(K_{n-1}/K(t))$. By Proposition \ref{prop:stabpr_gp} (case a)), the claim follows as soon as $\deg(g_n)$ is non-prime, so assume $\deg(g_n)=:p_n$ is a prime. But still, by Proposition \ref{prop:stabpr_gp} (case a) and b)), the claim follows unless $\sigma$ acts trivially or as a $p_n$-cycle on every component. So $g_n$ must be a polynomial of degree $p_n$ with a finite critical point of multiplicity $p_n$ under $g_n$, and this is hence the {\it only} finite critical point of $g_n$. So  
 $\sigma$ is in fact nontrivial only on a single component. But then, due to the transitive action of $\textrm{Gal}(K_n/K(t))$ on the components, it follows easily that $\textrm{Gal}(K_n/K_{n-1})$ contains the full direct power $C_{p_n}^{\deg(g_1\circ\dots\circ g_{n-1})}$, and again, the claim follows by Proposition \ref{prop:stabpr_gp} (case c)). 
  \end{proof}

  \begin{remark}
  \label{rem:ferr}
  Compare Lemma \ref{lem:cond} to \cite[Theorem 2.3]{Ferr}, which obtained the density zero conclusion under the assumption that the index of $G_{a,n}$ inside the maximal possible group, namely inside the $n$-fold iterated wreath product $[S_d]^n$ of $S_d$ ($d:=\deg(f)$) is $o(d^n)$. This ``largeness" assumption also implies that $f$ is postcritically infinite, but is far from equivalent to it.
  This is because a group with this largeness property automatically has Hausdorff dimension $\lim_{n\to \infty} \frac{\log|G_{a,n}|}{\log|[S_d]^n|} = 1$, whereas on the contrary, for every $f$ possessing some iterate $F:=f^{\circ n}$ with $\textrm{Gal}(F(X)-t/K(t)) < [S_d]^n$ (which is certainly the case for PCF maps due to their dynamical monodromy group being topologically finitely generated), the Hausdorff dimension is easily seen to be $<1$.
  \end{remark}
  
 As already mentioned, it is generally expected, although hard to prove, that the assumption of Lemma \ref{lem:cond} holds (even with the ``slowly growing" index replaced by finite index) outside of some very concrete special cases (see, e.g., \cite{BDGHT}).

  \section{Some remarks on low degree polynomials} 
  \label{sec:lowdeg}
Motivated by the preceding observations, we may ask about the {\it maximal} possible proportion of stable primes for polynomials of a given degree, and about the exact polynomials which attain this maximum. Due to our above results, we can completely answer this problem for cubic polynomials over $\mathbb{Q}$.
  \begin{theorem}
  \label{thm:max}
  Let $f\in \mathbb{Q}[X]$ be a cubic polynomial, $a\in \mathbb{Q}$, and denote the proportion of stable primes of $(f,a)$ by $\pi_{f,a}$. Then $\pi_{f,a}\le \frac{2}{3}$, with equality if and only if $(f,a)$ fulfill all of the following, up to conjugation\footnote{I.e., replacing $(f,a)$ by $(\mu\circ f\circ \mu^{-1}, \mu(a))$ for a linear $\mu\in \mathbb{Q}[X]$.} over $\mathbb{Q}$.
  \begin{itemize}
  \item[a)] $f(X) = \pm (X^3-3cX)$, where $c$ is a positive integer of the form $c=\alpha^2+3\beta^2$ for $\alpha,\beta\in \mathbb{Z}$.
  \item[b)] $a\notin f(\mathbb{Q})$, but $a\in g(\mathbb{Q})$, where 
  $g(X):=\frac{2c(\alpha X^2+6\beta X-3\alpha)}{X^2+3}$ with $\alpha,\beta, c$ as in a).
  \end{itemize}
  \end{theorem}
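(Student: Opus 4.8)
Throughout write $G_{a,n}:=\mathrm{Gal}(f^{\circ n}(X)-a/\mathbb{Q})$ and recall that conjugating $(f,a)$ by a linear $\mu\in\mathbb{Q}[X]$ preserves mod-$p$ stability (cf.\ the remark after Theorem~\ref{thm:main}), so $\pi_{f,a}$ is a conjugacy invariant and we may freely normalize $f$ and $a$. The first, easy, half is the bound $\pi_{f,a}\le \tfrac23$. If $f(X)-a$ is reducible over $\mathbb{Q}$ there are no stable primes and $\pi_{f,a}=0$, so assume $G_{a,1}$ is a transitive subgroup of $S_3$, i.e.\ $G_{a,1}\in\{C_3,S_3\}$. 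Since any stable prime $p$ in particular has $f(X)-a$ irreducible mod $p$, Chebotarev gives $\pi_{f,a}\le c_1$, where $c_1$ is the proportion of $3$-cycles in $G_{a,1}$; this proportion is $\tfrac23$ if $G_{a,1}=C_3$ and $\tfrac13$ if $G_{a,1}=S_3$. Hence $\pi_{f,a}\le\tfrac23$, with equality forcing $G_{a,1}=C_3$.

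Next I would cut the equality case down to a tiny family. If $f$ is postcritically infinite it has a wandering critical point; if $f$ is moreover unicritical, Theorem~\ref{thm:unicrit}(1) gives $\pi_{f,a}=0$, while otherwise $f$ has two distinct (possibly irrational) critical points, each of multiplicity $2$ under $f$, and since $2$ is a prime coprime to $\deg f=3$, Theorem~\ref{thm:main} (applied after the harmless shift $X\mapsto X+a$) gives $\pi_{f,a}=0$. So equality forces $f$ postcritically finite. Furthermore $f$ cannot be unicritical: a unicritical cubic over $\mathbb{Q}$ is conjugate to some $uX^3+v$, and $\mathrm{Gal}(uX^3+v-a/\mathbb{Q})=\mathrm{Gal}(X^3-b/\mathbb{Q})$ is either reducible or $\cong S_3$, never $C_3$, because the discriminant $-27b^2$ of $X^3-b$ is never a square in $\mathbb{Q}$; hence $\pi_{f,a}\le\tfrac13$ there. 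Thus we are reduced to $f$ a PCF cubic with two distinct critical points (each of multiplicity $2$) and $G_{a,1}=C_3$.

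The core of the proof, and the main obstacle, is to identify these $(f,a)$ precisely. For a cubic the group $H$ in Proposition~\ref{prop:stabpr_gp} is always transitive in $S_3=AGL_1(3)$, so case a) never applies, and passing from level $n-1$ to $n$ one has $c_n<c_{n-1}$ \emph{unless} $N_n:=\ker(G_{a,n}\to G_{a,n-1})$ is elementary abelian of exponent $3$; and even then $c_n=c_{n-1}=\tfrac23$ requires $N_n$ to be a proper, ``balanced'' subgroup of $C_3^{\deg f^{\circ n-1}}$ lying in the augmentation-type subspace $\{\sum_j y_j=0\}$. Concretely, this says that every factor $f(X)-\alpha$ of $f^{\circ n}(X)-a$ (with $\alpha$ a root of $f^{\circ n-1}(X)-a$) must have square discriminant over $\mathbb{Q}(\alpha)$, for all $n$, i.e.\ $\mathrm{disc}(f(X)-\alpha)\in(\mathbb{Q}(\alpha)^*)^2$ for every $\alpha$ in the full backward orbit of $a$. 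Translating this into geometry: the conic $\mathcal{C}\colon Y^2=\mathrm{disc}(f(X)-t)$ must have a rational point and must dominate the $t$-line through a degree-$2$ rational map $g\colon\mathbb{P}^1\to\mathbb{P}^1$; demanding this compatibly along the whole backward tree, together with the two-critical-point normal form and PCF-ness, forces $f$ to be (conjugate to) a Chebyshev--Dickson polynomial $\pm(X^3-3cX)$, for which $\mathrm{disc}(f(X)-t)=27(4c^3-t^2)$. The condition that $\mathcal C$ carry a rational point then becomes that $12c^3$, equivalently $c$, be a norm from $\mathbb{Q}(\sqrt{-3})$, i.e.\ $c=\alpha^2+3\beta^2$; and writing down the resulting rational parameterization of $\mathcal C\to\mathbb{P}^1_t$ yields exactly $g(X)=\dfrac{2c(\alpha X^2+6\beta X-3\alpha)}{X^2+3}$, so that condition b) is precisely the statement ``$G_{a,1}=C_3$'' ($\mathrm{disc}(f(X)-a)$ a square and $a\notin f(\mathbb{Q})$). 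Conversely, for $f=\pm(X^3-3cX)$ and $a$ as in b), I would use the semiconjugacy of the Chebyshev--Dickson map to $z\mapsto z^3$ on the $\mathbb{Q}(\sqrt{-3})$-twisted torus to describe $G_{a,n}$ explicitly as the corresponding ``metabelian'' group, and check directly that its proportion of full $3^n$-cycles is $\tfrac23$ at every level, so that $\pi_{f,a}=\tfrac23$.

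The hardest part is precisely this last step: reconciling the level-by-level group-theoretic ``no drop'' constraint (via Proposition~\ref{prop:stabpr_gp} and Corollary~\ref{cor:stabpr}) with the arithmetic of discriminants along the \emph{entire} backward orbit, and — especially — the converse direction, where one must show that the Chebyshev--Dickson data genuinely realizes the value $\tfrac23$ rather than merely failing to be excluded; this requires an explicit handle on the arboreal representation through the torus semiconjugacy and the splitting behaviour of primes in the relevant cyclotomic-type towers over $\mathbb{Q}(\sqrt{-3})$.
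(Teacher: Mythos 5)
Your opening moves match the paper: the bound $\pi_{f,a}\le\frac23$ from level $1$ via Chebotarev, the elimination of postcritically infinite cubics (unicritical ones by Theorem~\ref{thm:unicrit}(1), the rest by Theorem~\ref{thm:main} since a wandering finite critical point of a non-unicritical cubic has multiplicity $2$, coprime to $3$), the exclusion of unicritical PCF cubics via the non-square discriminant $-27b^2$, and the discriminant/conic computation $\mathrm{disc}(f(X)-t)=27(4c^3-t^2)$ with the parametrization giving $c=\alpha^2+3\beta^2$ and $g$. These steps are sound and essentially the paper's.

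The genuine gap is at the heart of the necessity direction. Having reduced to ``$f$ a PCF, non-unicritical cubic with $G_{a,1}\cong C_3$ and no drop in the full-cycle proportion at any level'', you assert that demanding the square-discriminant/conic condition ``compatibly along the whole backward tree, together with the two-critical-point normal form and PCF-ness, \emph{forces} $f$ to be (conjugate to) a Chebyshev--Dickson polynomial.'' No mechanism is given for this implication, and it is exactly the hard point: a priori some other PCF cubic could admit rational $a$ with $G_{a,1}\cong C_3$ and keep the proportion at $\frac23$ for many or all levels, and your ``balanced kernel / augmentation subspace'' heuristic does not by itself exclude this. The paper resolves it by a finite verification: it invokes the classification of cubic PCF polynomials over $\mathbb{Q}$ (Anderson--Manes--Tobin) and checks, using the criterion that all iterates $f^{\circ n}(X)-a$ (already $n=2$) must be totally real — otherwise all full cycles land in an index-$2$ subgroup and the proportion drops to at most $\frac13$ — that no cubic on that list other than $\pm(X^3-3cX)$ admits any suitable $a\in\mathbb{Q}$. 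You would need either this classification-based check or a genuine substitute; as written, the Dickson shape is assumed rather than derived. A secondary, smaller gap is in your converse: from ``full-cycle proportion $\frac23$ at every finite level'' you conclude $\pi_{f,a}=\frac23$, but the density of \emph{stable} primes is an intersection over all levels, and passing to the limit requires that only finitely many primes ramify in the whole tower — this is where the paper uses postcritical finiteness via \cite{Betal} (equivalently, that for Dickson towers being a full cycle at every level is detected already at level $1$). Without that input the nested sets of density $\frac23$ could a priori intersect in a set of smaller density.
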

  \begin{remark}
  The polynomials $f$ above are the normalized third Dickson polynomials with parameter $c$, as well as their negatives. In particular, $c=1$ gives the third Chebyshev polynomial.
  \end{remark}
  \begin{proof}
  The bound $\pi_{f,a}\le \frac{2}{3}$ is trivial; indeed, $\pi_{f,a}$ is upper-bounded by the proportion of full cycles in $\textrm{Gal}(f^{\circ n}(X)-a/\mathbb{Q})$ for any given $n\in \mathbb{N}$, and the bound $2/3$ already follows from $n=1$.
  To see that the pairs $(f,a)$ fulfilling the above two conditions indeed attain the bound, note that due to $\pm f$ being conjugate to a Dickson  polynomial, one has $G_{t,n}:=\textrm{Gal}(f^{\circ n}(X)-t/\mathbb{Q}(t)) \cong C_{3^n}\rtimes \textrm{Aut}(C_{3^n})$. It is well-known  (and easy to show more generally with $3$ replaced by any odd prime) that any preimage of a $3$-cycle in $G_{t,1}\cong S_3$ is a full cycle in $G_{t,n}$. In particular, the proportion of full cycles in $G_{t,n}$ is $\frac{1}{3}$ for all $n$. Note also that, due to $f$ being PCF, only finitely many primes ramify in the compositum of all splitting fields of $f^{\circ n}(X)-a$, $n\in \mathbb{N}$; cf.\ \cite{Betal}. We may thus invoke Chebotarev's density theorem to see that in order to get the desired proportion $2/3$, it is necessary and sufficient that $\textrm{Gal}(f(X)-a/\mathbb{Q})\cong C_3$. It is now elementary to parameterize the quadratic subfield of the splitting field of $f(X)-t$:
  it is generated by the root of the discriminant of the latter polynomial, and thus easily calculated as $\mathbb{Q}(t)(\sqrt{12c^3-3t^2})$; indeed, all observations so far hold for any $c\in \mathbb{Z}\setminus\{0\}$. It then remains to find out when the conic $12C^3-3T^2-Y^2=0$ has rational points, and if so, to parameterize it. Existence of points is very easily seen to be equivalent to $c$ being of the form $\alpha^2+3\beta^2$, while calculation of the parameter $t=g(X)$ is an elementary exercise whose details we omit.

  We have thus completely analyzed the case of polynomials which are conjugate over $\overline{\mathbb{Q}}$ to the Chebyshev polynomial (resp., its negative), since any such polynomial is conjugate over $\mathbb{Q}$ to a Dickson polynomial (resp., its negative). 
  To see that no other cubic polynomial can attain the bound $\frac{2}{3}$, note first that Corollary \ref{thm:primedeg} rules out all postcritically infinite  cubics $f$ other than the ones linearly related to $X^3$. On the other hand, for any $f$ linearly related to $X^3$, all full cycles must be contained in the index-$2$ subgroup fixing $\zeta_3$, i.e., one has full cycle proportion $\le \frac{1}{3}$ (even already in $G_{a,1}$). 
   This rules out any $f$ linearly related to $X^3$. To deal with the remaining PCF cases, it is useful to note that, by Proposition \ref{prop:stabpr_gp}b), to attain the bound $\frac{2}{3}$, one necessarily needs $G_{a,n}$ to be a $3$-group for all $n$, and therefore in particular all roots of $f^{\circ n}(X)-a$ to be real. One may now use the classification result obtained in \cite{AMT}\footnote{A little bit of care is required since this classification is up to conjugation over $\overline{\mathbb{Q}}$, whereas a priori only conjugation over $\mathbb{R}$ preserves real fibers. This problem is, however, easily solved upon considering the concrete ramification types of iterates, which can be read out of \cite[Table 1]{AMT}, and noting, e.g., that a real fiber can only occur for $a$ in between two successive real critical values, both of ramification index $2$.} and verify directly that no such cubic $f$, other than the ones conjugate to the Chebyshev polynomial or its negative, has any values $a\in \mathbb{Q}$ rendering $f(f(X))-a$ totally real. (Note that this is a short finite computation, since the number of real roots of $g(X)-a$ for a polynomial $g\in \mathbb{R}[X]$ is constant for $a$ in a fixed component of $\mathbb{R}$ minus the critical values.)
  \end{proof}
  \begin{remark}
  \begin{itemize}
  \item[a)] Using the results of \cite{MOS}, which show in particular that postcritically infinite quadratic polynomials necessarily have a density zero set of stable primes, as well as the known short list $\{X^2-c\mid c\in \{0,1,2\}\}$ of PCF quadratics over $\mathbb{Q}$ up to conjugacy, 
  one may analogously list all quadratic polynomials $f$ and values $a\in \mathbb{Q}$ obtaining the maximal possible value $\pi_{f,a}=\frac{1}{2}$ for this degree. A brief computation shows that again, up to conjugacy, the Chebyshev polynomial (i.e., $f=X^2-2$) is the only polynomial admitting such values.
  \item[b)] We conjecture that the value $\frac{2}{3}$ is the largest possible value for $\pi_{f,a}$ for $f\in \mathbb{Q}[X]$ of {\it any} degree $>1$. Note that the degree-$p$ monomials $X^p$, resp., Chebyshev polynomials $T_p$ (for $p>3$ prime), are not suitable as composition factors for a hypothetical ``record breaking" polynomial. Indeed, including them can at best lead to $\pi_{f,a}=\frac{1}{p}$, resp., $\pi_{f,a}=\frac{2}{p}$, since their splitting field  always contains the extension $\mathbb{Q}(\zeta_p)/\mathbb{Q}$, resp., its maximal real subextension, meaning that all full cycles are contained inside a fixed normal subgroup of index $p-1$, resp., index $\frac{p-1}{2}$.
  \end{itemize}
  \end{remark}

  \end{document}